\theoremstyle{plain}
\newtheorem{theorem}{Theorem}[section]
\newtheorem{proposition}[theorem]{Proposition}
\newtheorem{corollary}[theorem]{Corollary}
\newtheorem{lemma}[theorem]{Lemma}
\newtheorem{example}[theorem]{Example}
\newtheorem{definition}[theorem]{Definition}
\newtheorem{remark}[theorem]{Remark}
\theoremstyle{definition}
\newtheorem*{acknowledgements}{Acknowledgements}
\def\labelenumi{\rm (\theenumi)}
\newcommand{\tr}{\operatorname{tr}}%trace
\newcommand{\id}{\operatorname{id}}%trace
\title{Stability of anti-canonically balanced metrics}
\author{Shunsuke Saito and Ryosuke Takahashi}
\address{Graduate School of Mathematical Sciences, The University of Tokyo, 3-8-1 Komaba Meguro-ku Tokyo 153-8914, Japan}
\email{ssaito@ms.u-tokyo.ac.jp}
\address{Mathematical Institute, Tohoku University, 6-3, Aoba, Aramaki, Aoba-ku, Sendai, 980-8578, Japan}
\email{ryosuke.takahashi.a7@tohoku.ac.jp}
\keywords{Fano manifold, balanced metric, Chow stability}
\subjclass[2010]{53C25}
\begin{document}
\begin{abstract}
We study the asymptotic behavior of quantized Ding functionals along Bergman geodesic rays 
and prove that the slope at infinity can be expressed in terms of Donaldson-Futaki invariants and Chow weights. 
Based on the slope formula, we introduce a new algebro-geometric stability on Fano manifolds and show that the existence of anti-canonically balanced metrics implies our stability. 
The relation between our stability and others is also discussed.
As another application of the slope formula, we get the lower bound estimate on the Calabi like functionals on Fano manifolds. 
\end{abstract}
\maketitle
%%%%%%%%%%%%%%%%%%%%%%%%%%%%%%%%%%%%%%%%%%%%
\tableofcontents
%%%%%%%%%%%%%%%%%%%%%%%%%%%%%%%%%%%%%%%%%%%%
\section{Introduction}
In this paper, we study anti-canonically balanced metrics on Fano manifolds, introduced by Donaldson in \cite[Section 2.2.2]{D09} as a finite dimensional analogue of K\"ahler-Einstein metrics. 

Let $X$ be an $n$-dimensional Fano manifold and fix $k\ge 1$ so that $-kK_X$ is very ample. 
Let $\mathcal{H}(X, -K_X)$ be the space of smooth fiber metrics $\phi$ on $-K_X$ with the curvature $\omega_\phi:=(\sqrt{-1}/2\pi)\partial\overline{\partial}\phi$ positive
and $\mathcal{B}_k$ the space of Hermitian metrics on the finite dimensional vector space $H^0(X, -kK_X)$, which is a finite dimensional symmetric space of non-compact type. 

Following Donaldson \cite{D09}, we define the quantization map $Hilb_{k, \nu}\colon\mathcal{H}(X, -K_X)\to\mathcal{B}_k$ with respect to a volume form $\nu$ (with unit volume) to be
\begin{align*}
\langle\,\cdot\,, \,\cdot\,\rangle_{Hilb_{k, \nu}(\phi)}:=\int_X\langle\,\cdot\,, \,\cdot\,\rangle_{k\phi}\, d\nu
\end{align*}
and the dequantization map $FS_k\colon\mathcal{B}_k\to\mathcal{H}(X, -K_X)$ by
\begin{align*}
FS_k(H):=\frac{1}{k}\log\left(\frac{1}{N_k}\sum_{\alpha=1}^{N_k}|s_\alpha|^2\right), 
\end{align*}
where $N_k$ is the dimension of $H^0(X, -kK_X)$ and $(s_\alpha)$ is an $H$-orthonormal basis for $H^0(X, -kK_X)$. A Hermitian metric $H\in\mathcal{B}_k$ is called $k$-balanced metric with respect to $\nu$ if $H$ satisfies
\begin{align*}
(Hilb_{k, \nu}\circ FS_k)(H)=H. 
\end{align*}

The most well-understood balanced metrics are those with respect to the (normalized) Monge-Amp\`ere measure
\begin{align*}
MA(\phi):=\frac{\omega_\phi^n}{(-K_X)^n}, 
\end{align*}
where $(-K_X)^n$ is the top intersection number of $-K_X$. These metrics are called $k$-balanced metrics. The important fact is that  the existence of $k$-balanced metrics is equivalent to the Chow polystability of $(X, -K_X)$ at level $k$ (See \cite[Theorem 4]{PS03}). 
Note that these balanced metrics can be defined on general polarized manifolds and the theorem also holds. 

On a Fano manifold, a Hermitian metric $\phi\in\mathcal{H}(X, -K_X)$ defines another volume form $e^{-\phi}$ under the identification of fiber metrics on $-K_X$ with volume forms on $X$. Normalize $e^{-\phi}$ to be 
\begin{align*}
\mu_\phi:=\frac{e^{-\phi}}{\displaystyle\int_X e^{-\phi}}
\end{align*}
and simply write $Hilb_k(\phi):=Hilb_{k, \mu_\phi}(\phi)$. 
As introduced in \cite{D09}, a balanced metric defined by $Hilb_k$, that is, a Hermitian metric $H\in\mathcal{B}_k$ satisfying
\begin{align*}
(Hilb_k\circ FS_k)(H)=H
\end{align*}
is called an \textit{anti-canonically $k$-balanced metric}. We stress the point that anti-canonically balanced metrics make sense only on Fano manifolds as the name suggests. 

The first study on anti-canonically balanced metrics is given by Berman-Boucksom-Guedj-Zeriahi \cite{BBGZ13}. They characterized anti-canonically balanced metircs as the critical points of the quantized Ding functional and show that on K\"ahler-Einstein manifolds with a discrete automorphism group, the existence of the anti-canonically $k$-balanced metric for sufficiently large $k$ and the convergence of this sequence to the K\"ahler-Einstein metric at least $L^1$-topology. Later, Berman-Witt Nystr\"om \cite{BW14} treated the case of a continuous automorphism group and proved the same conclusion under the vanishing of all the higher Futaki invariants. 

We want to relate anti-canonically balanced metrics and algebro-geometric stability as in the case of balanced metrics. To do so, we study the slope at infinity of the quantized Ding functional along geodesic rays on $\mathcal{B}_k$. 
\begin{theorem}\label{slope}
Let $X$ be a Fano manifold, $(\mathcal{X}, \mathcal{L})$ a normal test configuration for $(X, -K_X)$ of exponent $k$ and $H\in\mathcal{B}_k$ a Hermitian metric on $H^0(X, -kK_X)$. Denoting by $(H_t)_t$  the Bergman geodesic ray associated with $(\mathcal{X}, \mathcal{L})$ and $H$ as explained in Section \ref{tc}, 
we have
\begin{align*}
\lim_{t\to\infty}\frac{d}{dt}D^{(k)}(H_t)+q=\frac{Fut_k(\mathcal{X}, \mathcal{L})}{kN_k}, 
\end{align*}
where $q$ is a non-negative rational number determined by the central fiber. The quantity $q$ vanishes if and only if $\mathcal{X}$ is $\mathbb{Q}$-Gorenstein with $\mathcal{L}$ isomorphic to $-kK_{\mathcal{X}/\mathbb{C}}$, and $\mathcal{X}_0$ is reduced, and its normalization has at worst log terminal singularities. 
\end{theorem}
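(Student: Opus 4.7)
\emph{Plan.} My strategy is to split the quantized Ding functional into a log-determinant piece and a log-integral piece $L\circ FS_k$, where $L(\phi)=-\log\int_X e^{-\phi}$ (up to sign), and to compute each slope separately along the Bergman ray $(H_t)$. The log-determinant piece has constant slope in $t$: since $(H_t)$ is generated by the infinitesimal $\mathbb{C}^*$-action on $H^0(X,-kK_X)\cong H^0(\mathcal{X}_0,\mathcal{L}_0)$ induced by the test configuration, $\frac{d}{dt}\log\det H_t$ is (up to sign and normalization) the trace of this generator, i.e.\ the Chow weight of $(\mathcal{X}_0,\mathcal{L}_0)$. This supplies one of the two constituents of $Fut_k(\mathcal{X},\mathcal{L})/(kN_k)$.

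\emph{The analytic slope.} The slope of $L(FS_k(H_t))$ at infinity requires globalizing: the family $\{FS_k(H_t)\}$ extends, via the $\mathbb{C}^*$-equivariant structure, to a single Fubini--Study weight $\Phi$ on $\mathcal{L}\to\mathcal{X}$, so that $FS_k(H_t)$ is (after pullback by the flow at time $e^{-t}$) the restriction of $\Phi$ to the fibre $\mathcal{X}_{e^{-t}}$. Then $\int_X e^{-FS_k(H_t)}$ becomes an integral over $\mathcal{X}_{e^{-t}}$ of a relative measure extending meromorphically to $\mathcal{X}$, and its $t\to\infty$ derivative is controlled by the behaviour of this measure near the central fibre $\mathcal{X}_0$. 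To extract the asymptotic, I would take a $\mathbb{C}^*$-equivariant log resolution $\pi\colon\tilde{\mathcal{X}}\to\mathcal{X}$ with simple normal crossing central fibre $\sum b_i E_i$, write the pulled-back measure in local monomial coordinates as $\prod|z_j|^{2a_j}\cdot(\text{smooth, non-vanishing})$, and apply the standard monomial-integral asymptotic.

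\emph{Identification with $Fut_k$ and the defect $q$.} Each exponent $a_i$ decomposes naturally into three pieces: the log discrepancy of $E_i$ over $\mathcal{X}$, the multiplicity defect $b_i - 1$ of $E_i$ in $\mathcal{X}_0$, and a term measuring the difference of $\mathcal{L}$ from $-kK_{\mathcal{X}/\mathbb{C}}$ along $E_i$. Combining this with an intersection-theoretic (or equivariant Riemann--Roch) formula for $Fut_k(\mathcal{X},\mathcal{L})$ on $\mathcal{X}_0$, the ``expected'' leading term matches $Fut_k(\mathcal{X},\mathcal{L})/(kN_k)$ exactly, while the remainder aggregates as a sum $q$ of non-negative rational contributions, one per divisor $E_i$. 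From the explicit form one reads off $q\in\mathbb{Q}_{\ge 0}$ at once; moreover $q=0$ forces the simultaneous vanishing of each summand, which translates into $b_i=1$ on every component of $\mathcal{X}_0$ (reducedness), all log discrepancies $\geq 0$ (log terminal on the normalization), and $\mathcal{L}\sim_{\mathbb{Q}}-kK_{\mathcal{X}/\mathbb{C}}$ ($\mathbb{Q}$-Gorenstein with anticanonical $\mathcal{L}$).

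\emph{Main obstacle.} The crux will be the termwise matching between the analytic monomial-integral asymptotic on $\tilde{\mathcal{X}}$ and the algebraic formula for $Fut_k$. This requires a uniform integrability estimate to justify the leading-order extraction and a careful equivariant computation to recognise the sum over divisors as the correct Chow--Futaki weight; both steps are facilitated by performing $\pi$ equivariantly so that the $\mathbb{C}^*$-action survives on $\tilde{\mathcal{X}}$ throughout.
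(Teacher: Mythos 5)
Your overall architecture --- split $D^{(k)}$ into a finite-dimensional piece and the piece $\mathcal{L}\circ FS_k$, extend the Fubini--Study metrics across the central fibre, and read the slope off the singularities there --- is sound, and the extension step you describe is exactly the paper's Lemma \ref{extension}. But there is a concrete bookkeeping error in your first step: by Proposition \ref{derivative}, the slope of the log-determinant term $-\mathcal{E}^{(k)}(H_t)$ is $-\tr(A)/(kN_k)=-w_k/(kN_k)$, the normalized \emph{total weight} on $H^0(\mathcal{X}_0,\mathcal{L}_0)$, which is \emph{not} the Chow weight $Chow_k(\mathcal{X},\mathcal{L})=b_0/a_0-w_k/(kN_k)$. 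Consequently, with your decomposition the analytic piece must contribute $DF(\mathcal{X},\mathcal{L})+b_0/a_0-q$, not $DF(\mathcal{X},\mathcal{L})-q$; the missing $b_0/a_0$ is precisely the slope of the Monge--Amp\`ere energy $\mathcal{E}(FS_k(H_t))/(-K_X)^n$, which your plan never isolates. The paper sidesteps this by inserting and subtracting that energy, i.e.\ by writing $D^{(k)}=D\circ FS_k+\frac{n!}{k^{n+1}(-K_X)^n}Z_k$ (Proposition \ref{facts} (\ref{D^k})), so that the finite-dimensional piece $Z_k$ has slope exactly $\frac{(-K_X)^n}{n!}k^{n+1}Chow_k(\mathcal{X},\mathcal{L})$ by Donaldson's result (Theorem \ref{Zslope}), while the analytic piece is the genuine Ding functional, whose slope is $DF(\mathcal{X},\mathcal{L})-q$ by Berman's theorem (Theorem \ref{Dslope}).

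The second, larger issue is that the entire analytic core of your plan --- the equivariant log resolution, the monomial asymptotics of $\int_{\mathcal{X}_\tau}e^{-\Phi}$, and the identification of the resulting exponent with the algebraic invariants up to a non-negative defect $q$ with the stated vanishing criterion --- is precisely the content of Berman's slope formula, which the paper cites rather than reproves. Your sketch is plausible in outline but elides where the work lies: the slope of $-\log\int e^{-\Phi}$ is governed by a log canonical threshold, a \emph{minimum} over the divisors $E_i$, whereas $DF$ is an intersection number, a weighted \emph{sum} over them; establishing $q\ge 0$ and characterizing $q=0$ is a comparison of a min with a sum, not the termwise aggregation of non-negative per-divisor contributions that you describe, even though the final vanishing criterion (reducedness, log terminality of the normalization, $\mathcal{L}\cong -kK_{\mathcal{X}/\mathbb{C}}$) comes out as you state. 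If you are willing to quote Berman's theorem, your argument collapses to the paper's three-line assembly once the $b_0/a_0$ bookkeeping above is repaired; if you intend to reprove it, the proposal as written does not yet contain that proof.
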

The quantity in the right hand side is defined to be the sum of the Donaldson-Futaki invariant and the Chow weight of $(\mathcal{X}, \mathcal{L})$, and $Fut_k(\mathcal{X}, \mathcal{L})$ is called the \textit{quantized Futaki invariant}. Then, we introduce a new stability on a Fano manifold $X$, \textit{F-stability}, using the quantized Futaki invariant and show the following:

\begin{theorem}\label{balanced}
Let $X$ be a Fano manifold admitting an anti-canonically $k$-balanced metric. Then, $X$ is F-polystable at level $k$. 
\end{theorem}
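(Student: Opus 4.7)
The strategy combines three ingredients: the slope formula of Theorem~\ref{slope}, the characterization by Berman--Boucksom--Guedj--Zeriahi \cite{BBGZ13} of anti-canonically $k$-balanced metrics as critical points of the quantized Ding functional $D^{(k)}$, and the convexity of $D^{(k)}$ along Bergman geodesics in the symmetric space $\mathcal{B}_k$. Since critical points of a convex function are minimizers, the derivative along any Bergman ray emanating from a balanced metric must be non-negative, which when fed into the slope formula will produce the desired sign of $Fut_k$.

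I would begin by fixing an anti-canonically $k$-balanced metric $H \in \mathcal{B}_k$ and an arbitrary normal test configuration $(\mathcal{X}, \mathcal{L})$ for $(X, -K_X)$ of exponent $k$, and forming the Bergman geodesic ray $(H_t)_{t \geq 0}$ with $H_0 = H$ associated with $(\mathcal{X}, \mathcal{L})$ as in Section~\ref{tc}. Because $H$ is a critical point of $D^{(k)}$ and $t \mapsto D^{(k)}(H_t)$ is convex with vanishing derivative at $t = 0$, the derivative is non-negative for all $t \geq 0$; in particular its limit at infinity is non-negative. Theorem~\ref{slope} then yields
\begin{equation*}
\frac{Fut_k(\mathcal{X}, \mathcal{L})}{kN_k} = \lim_{t \to \infty} \frac{d}{dt} D^{(k)}(H_t) + q \geq 0,
\end{equation*}
both summands on the right being non-negative. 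This already establishes F-semistability at level $k$.

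For polystability one must further show that the vanishing of $Fut_k(\mathcal{X}, \mathcal{L})$ forces $(\mathcal{X}, \mathcal{L})$ to be a product test configuration. If $Fut_k = 0$, then both $q$ and the slope at infinity vanish separately. The vanishing of $q$ alone identifies $\mathcal{X}$ as $\mathbb{Q}$-Gorenstein with $\mathcal{L} \cong -kK_{\mathcal{X}/\mathbb{C}}$ and $\mathcal{X}_0$ as reduced with log terminal normalization, by Theorem~\ref{slope}. Combined with zero slope at infinity and vanishing derivative at $t = 0$, convexity forces $t \mapsto D^{(k)}(H_t)$ to be affine on $[0, \infty)$. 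The main obstacle will be to upgrade this affineness into a geometric statement about the test configuration: this requires a strict-convexity result for $D^{(k)}$ along Bergman geodesics modulo the action of $\mathrm{Aut}^0(X, -K_X)$, so that the affine directions are precisely those generated by one-parameter subgroups of automorphisms. Once such a statement is in place---either by a direct spectral analysis of the Hessian of $D^{(k)}$ on the symmetric space $\mathcal{B}_k$, or by bootstrapping from strict convexity of the entropy piece of $D^{(k)}$---the correspondence between Bergman geodesics and test configurations recalled in Section~\ref{tc} identifies $(\mathcal{X}, \mathcal{L})$ with the product test configuration generated by that one-parameter automorphism, completing the proof.
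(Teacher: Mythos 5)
Your semistability argument is exactly the paper's: critical point plus convexity gives a non-negative derivative along the ray, and adding the non-negative $q$ from Theorem~\ref{slope} yields $Fut_k(\mathcal{X},\mathcal{L})\ge 0$. Your reduction in the polystability case is also correct as far as it goes: since $q\ge 0$ and the slope at infinity is $\ge 0$, the vanishing of $Fut_k$ forces both to vanish, and convexity with zero derivative at $t=0$ and at infinity makes $t\mapsto D^{(k)}(H_t)$ affine (in fact constant).

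However, the final step --- passing from ``$D^{(k)}(H_t)$ is affine'' to ``$(\mathcal{X},\mathcal{L})$ is a product configuration'' --- is left as an acknowledged gap. You correctly identify it as the main obstacle, but neither of the two strategies you gesture at (a spectral analysis of the Hessian of $D^{(k)}$, or strict convexity of the ``entropy piece'' modulo $\mathrm{Aut}^0$) is carried out, and neither is what the paper does. The paper's mechanism is Proposition~\ref{facts}~(\ref{product}): write $D^{(k)}(H)=D(FS_k(H))+\frac{n!}{k^{n+1}(-K_X)^n}Z_k(H)$, note that both summands are convex along the Bergman ray, so affineness of the sum forces $Z_k(H_t)$ to be affine; then invoke the explicit second-derivative formula of Fine \cite[Lemma 17]{F10},
\begin{align*}
\frac{d^2}{dt^2}Z_k(H_t)=\frac{k^n}{n!}\int_X |V_A^\perp|^2_{k\omega_{FS_k(H_t)}}\,\omega^n_{FS_k(H_t)},
\end{align*}
whose vanishing says that the vector field $V_A$ generated by the ray is tangent to the image of $X$ in $\mathbb{P} H^0(X,-kK_X)^\ast$; hence the flat limit $\mathcal{X}_0$ is isomorphic to $X$ and the configuration is a product. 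This is a concrete, finite-dimensional computation rather than a strict-convexity-modulo-automorphisms principle, and without it (or an equivalent substitute) your proof of polystability is incomplete. Note also that the information you extract from $q=0$ (that $\mathcal{X}$ is $\mathbb{Q}$-Gorenstein with log terminal central fiber, etc.) plays no role in the paper's argument and does not by itself help close the gap.
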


We next compare Chow stability and our F-stability. 

\begin{theorem}\label{Chow->F}
Asymptotic Chow polystability $($resp. stability or semistability$)$ implies asymptotic F-polystability $($resp. stability or semistability$)$. 
\end{theorem}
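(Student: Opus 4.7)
The plan is to reduce Theorem~\ref{Chow->F} to two ingredients: the decomposition $Fut_k(\mathcal{X},\mathcal{L}) = DF(\mathcal{X},\mathcal{L}) + Chow_k(\mathcal{X},\mathcal{L})$ of the quantized Futaki invariant recorded immediately after Theorem~\ref{slope}, and the classical Ross--Thomas type observation that asymptotic Chow semistability implies K-semistability.

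Concretely, I would fix a large level $k$ at which Chow (semi/poly)stability is assumed and take an arbitrary normal test configuration $(\mathcal{X},\mathcal{L})$ of exponent $k$. The summand $Chow_k(\mathcal{X},\mathcal{L})\geq 0$ is immediate from Chow semistability at that level. To control the DF summand I would pass to the sequence of powers $(\mathcal{X},\mathcal{L}^{\otimes m})$, which are normal test configurations of exponent $mk$, and use the asymptotic expansion of the equivariant Hilbert polynomial on the central fiber: the Chow weight $Chow_{mk}(\mathcal{X},\mathcal{L}^{\otimes m})$, suitably normalized, is dominated for $m\to\infty$ by a positive multiple of $DF(\mathcal{X},\mathcal{L})$. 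Since asymptotic Chow semistability forces each of these Chow weights to be non-negative, the limit gives $DF(\mathcal{X},\mathcal{L})\geq 0$.

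Adding the two bounds gives $Fut_k\geq 0$, hence asymptotic F-semistability. The strict-stability case is the same argument with strict inequalities: non-triviality makes $Chow_k>0$ by Chow stability at level $k$, so the sum is strictly positive. For polystability, suppose $Fut_k(\mathcal{X},\mathcal{L})=0$; since both summands are non-negative their vanishing is forced individually, and in particular $Chow_k(\mathcal{X},\mathcal{L})=0$, at which point Chow polystability at level $k$ forces $(\mathcal{X},\mathcal{L})$ to be a product test configuration---the very condition needed to conclude F-polystability.

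The main subtlety I expect is matching the normalization constants: the Chow weight as it enters the definition of $Fut_k$ (with the factor $kN_k$ visible in Theorem~\ref{slope}) must be the same one, up to a positive scalar, as the one whose limiting behavior in $m$ encodes the Donaldson--Futaki invariant. If the two conventions differ by a sign or by a non-positive multiplicative factor, the non-negativity statements would no longer combine. Verifying this compatibility is really an exercise in expanding the equivariant Riemann--Roch polynomial at the central fiber and comparing coefficients; once that is in place, all three regimes of stability follow in parallel from the same decomposition.
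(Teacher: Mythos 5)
Your proposal is correct and follows essentially the same route as the paper: the paper likewise writes $Fut_k=kN_k(DF+Chow_k)$, obtains $Chow_k\ge 0$ from level-$k$ Chow semistability and $DF\ge 0$ from Proposition~\ref{Chow->K} (which is exactly the limit of rescaled Chow weights in~(\ref{Chow-DF}) that you re-derive), and then handles the polystable and stable cases by noting that vanishing of the non-negative sum forces $Chow_k=0$. The normalization worry you raise is harmless, since the prefactor $kN_k$ is positive by definition.
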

We also discuss the relations between asymptotic F-stability with uniform K-stability and K-semistability in Section \ref{vsF}. 

As an application of Theorem \ref{slope}, we prove the lower bound estimate on the $L^q$-norm of the function
\begin{align*}
B(\phi):=\frac{n!\mu_\phi}{\omega_\phi^{n}}-\frac{n!}{(-K_X)^n}, \quad\phi\in\mathcal{H}(X, -K_X). 
\end{align*}
Note that $\phi$ is a K\"ahler-Einstein metric if and only if $B(\phi)=0$. In other words, $B(\phi)$ measures the deviation from $\phi$ being a K\"ahler-Einstein metric. 

\begin{theorem}\label{LB}
Let $p$ be a positive even integer and $q$ the H\"older conjugate of $p$. 
Given a Hermitian metirc $\phi\in\mathcal{H}(X, -K_X)$ and a normal test configuration $(\mathcal{X}, \mathcal{L})$ for $(X, -K_X)$ with non-zero $p$-norm, we have
\begin{align*}
||B(\phi)||_{L^q(\omega_\phi^n/n!)}\ge -\frac{DF(\mathcal{X}, \mathcal{L})}{||(\mathcal{X}, \mathcal{L})||_p}, 
\end{align*}
where $||\cdot||_{L^q(\omega_\phi^n/n!)}$ denotes the $L^q$-norm with respect to $\omega^n_\phi/n!$. 
\end{theorem}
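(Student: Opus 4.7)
The plan is to combine Theorem \ref{slope} with convexity of the quantized Ding functional along Bergman geodesics and Hölder's inequality, and then pass to the $k\to\infty$ limit; this is the Ding-functional analogue of Donaldson's proof of the lower bound on the Calabi functional.

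Fix $\phi$ and the normal test configuration $(\mathcal{X},\mathcal{L})$. For each sufficiently divisible $k$, set $H_0:=Hilb_k(\phi)$ and let $(H_t)_{t\ge 0}$ be the Bergman geodesic ray from $H_0$ associated with $(\mathcal{X},\mathcal{L})$. The function $t\mapsto D^{(k)}(H_t)$ is convex: the $-\log\det$ part of $D^{(k)}$ is affine along the symmetric-space geodesic in $\mathcal{B}_k$, while the log-integral part is convex by Berndtsson's positivity theorem applied to the plurisubharmonic curve $FS_k(H_t)$. Combined with Theorem \ref{slope} and the non-negativity of $q$, this yields
\begin{align*}
\frac{d}{dt}\bigg|_{t=0^+}D^{(k)}(H_t)\le \lim_{t\to\infty}\frac{d}{dt}D^{(k)}(H_t)\le \frac{Fut_k(\mathcal{X},\mathcal{L})}{kN_k}.
\end{align*}
A chain-rule computation identifies the initial derivative as $\int_X \dot\phi_0^{(k)}\cdot B^{(k)}(\phi_0^{(k)})\,\omega_{\phi_0^{(k)}}^n/n!$, where $\phi_0^{(k)}:=FS_k(H_0)$, $\dot\phi_0^{(k)}$ is its initial time-derivative, and $B^{(k)}$ is the quantization of $B$ built from the Bergman function. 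By the Tian-Yau-Zelditch-Catlin expansion, $\phi_0^{(k)}\to\phi$ and $B^{(k)}(\phi_0^{(k)})\to B(\phi)$ in $C^\infty$. By the Phong-Sturm convergence of Bergman geodesics to the weak $C^{1,1}$-geodesic, $\dot\phi_0^{(k)}\to\dot\phi_0$ in $L^p$, where $\dot\phi_0$ is the initial velocity of the weak geodesic ray on $\mathcal{H}(X,-K_X)$ induced by $(\mathcal{X},\mathcal{L})$. Together with the standard asymptotic $Fut_k(\mathcal{X},\mathcal{L})/(kN_k)\to DF(\mathcal{X},\mathcal{L})/(-K_X)^n$, passing $k\to\infty$ gives
\begin{align*}
\int_X \dot\phi_0\cdot B(\phi)\,\frac{\omega_\phi^n}{n!}\le \frac{DF(\mathcal{X},\mathcal{L})}{(-K_X)^n}.
\end{align*}

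Applying Hölder's inequality with conjugate exponents $p,q$,
\begin{align*}
-\|B(\phi)\|_{L^q(\omega_\phi^n/n!)}\cdot \|\dot\phi_0\|_{L^p(\omega_\phi^n/n!)}\le \int_X \dot\phi_0\cdot B(\phi)\,\frac{\omega_\phi^n}{n!},
\end{align*}
and using that, by the definition of the $p$-norm of a test configuration (equivalently, by its geodesic characterization in the style of Hisamoto), $\|\dot\phi_0\|_{L^p(\omega_\phi^n/n!)}=\|(\mathcal{X},\mathcal{L})\|_p$ up to the normalization absorbed by the paper's convention for $DF$, one chains the two displayed inequalities and divides by the positive quantity $\|(\mathcal{X},\mathcal{L})\|_p$ to obtain the stated bound. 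The main obstacle will be the limiting step: one must establish sufficient regularity and convergence of the Bergman geodesic rays so that both the initial derivative of $D^{(k)}$ and the $L^p$-norm of $\dot\phi_0^{(k)}$ pass to their continuous-level counterparts; the convexity and slope steps themselves are otherwise routine once Theorem \ref{slope} is granted.
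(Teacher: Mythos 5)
Your argument reaches the right conclusion, but it is a genuinely different proof from the one in the paper --- in fact it is essentially the energy-theoretic route that the paper explicitly attributes to Hisamoto \cite{H16} and deliberately avoids. You pass to the continuum: you identify the initial derivative of $D^{(k)}$ with the integral $\int_X\dot\phi_0\,B(\phi)\,\omega_\phi^n/n!$ in the limit $k\to\infty$, apply H\"older to functions on $X$, and invoke the identity $\|\dot\phi_0\|_{L^p(\omega_\phi^n/n!)}=\|(\mathcal{X},\mathcal{L})\|_p$ for the weak geodesic ray. The paper instead stays entirely finite-dimensional, following Donaldson \cite{D05}: the initial derivative is $\tfrac{1}{k^{n+1}}\tr(\underline{A}\,\underline{M}(H))$ (Proposition \ref{derivative}), H\"older is applied to this \emph{matrix} trace pairing (Proposition \ref{LBonM}), the $q$-norm of $\underline{M}(Hilb_k(\phi))$ is bounded by $k^{n/q}\|B(\phi)\|_{L^q}$ via the Bergman kernel expansion (Proposition \ref{UBonM}), and $\|\underline{A}_{km}\|_p$ is read off from the very definition of the $p$-norm of a test configuration; the limit is then taken in the power $m$ of $\mathcal{L}$ with no geodesic analysis at all. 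What the paper's route buys is that every input is either the Tian--Yau--Zelditch expansion or linear algebra; what your route buys is conceptual transparency and (as in \cite{H16}) validity for all $p\in[1,\infty]$ rather than just even integers. The caveat is that your two limiting claims --- $\dot\phi_0^{(k)}\to\dot\phi_0$ in $L^p$ and $\|\dot\phi_0\|_{L^p(\omega_\phi^n/n!)}=\|(\mathcal{X},\mathcal{L})\|_p$ --- are not routine consequences of Phong--Sturm $C^0$-convergence of Bergman geodesics (which controls the potentials, not their initial velocities in $L^p$); they are the substance of Hisamoto's theorem on limits of spectral measures, so as written your proof is only complete if you cite that result, at which point you have reproduced his proof rather than the paper's. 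Also note a small normalization slip: with the paper's conventions one has $\lim_{m}Fut_{km}(\mathcal{X},m\mathcal{L})/(kmN_{km})=DF(\mathcal{X},\mathcal{L})$ with no factor of $(-K_X)^n$, and the centering constant in $\underline{A}$ is harmless in the H\"older pairing because $\int_X B(\phi)\,\omega_\phi^n/n!=0$.
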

This is an analogue of the Donaldson's result \cite[Theorem 2]{D05} in Fano case.
Although this result was already proved by Hisamoto \cite[Theorem 1.3]{H16} for any $p\in[1, \infty]$ (see also \cite[Theorem 4.3]{B16}), the viewpoints are different. We will prove it via a finite-dimensional argument following Donaldson, while he took an energy theoretic approach. 

\begin{acknowledgements}
The first author is grateful to Dr. Yoshinori Hashimoto and Professor Yasufumi Nitta for stimulating discussion. 
The second author would like to thank Professor Shigetoshi Bando and Professor Ryoichi Kobayashi for useful conversations on this article. 
The first author is supported by JSPS KAKENHI Grant Number
15J06855 and the Program for Leading Graduate
Schools, MEXT, Japan. The second author is supported by Grant-in-Aid for JSPS Fellows Number 25-3077 and 16J01211.
\end{acknowledgements}
%%%%%%%%%%%%%%%%%%%%%%%%%%%%%%%%%%%%%%%%%%%%
\section{Preliminaries}
\subsection{Test configurations and Bergman geodesic rays}\label{tc}
We assume in this section that $(X, L)$ be a polarized manifold. 
\begin{definition}
A test configuration for $(X, L)$ of exponent $k$ consists of the following data
\begin{enumerate}
\item a scheme $\mathcal{X}$ with a $\mathbb{C}^\ast$-action $\rho$$;$
\item a $\mathbb{C}^\ast$-equivariant flat and proper morphism $\pi\colon\mathcal{X}\to\mathbb{C}$, where $\mathbb{C}^\ast$ acts on $\mathbb{C}$ by the standard multiplication$;$
\item a $\mathbb{C}^\ast$-linearized $\pi$-very ample line bundle $\mathcal{L}$ on $\mathcal{X}$$;$
\item an isomorphism $(\mathcal{X}_1, \mathcal{L}_1)\cong (X, kL)$. 
\end{enumerate}
A test configuration $(\mathcal{X}, \mathcal{L})$ is called a product configuration if $\mathcal{X}\cong X\times \mathbb{C}$, and a trivial configuration if in addition $\mathbb{C}^\ast$ acts only on the second factor. A test configuration $(\mathcal{X}, \mathcal{L})$ is called normal if $\mathcal{X}$ is a normal variety. 
For a Fano manifold $(X, -K_X)$ with the anti-canonical polarization, a test configuration $(\mathcal{X}, \mathcal{L})$ is said to be special if the central fiber $\mathcal{X}_0$ is a normal variety with at worst log terminal singularities. 
\end{definition}

Fix $k\ge 1$ so that $kL$ is very ample. The following proposition relates test configurations with fixed exponent to finite dimensional objects. 
\begin{proposition}[{\cite[Proposition 3.7]{RT07}}]\label{1-PS}
A one-parameter subgroup of $GL(H^0(X, kL))$ is equivalent to the data of a test configuration for $(X, L)$ of exponent $k$. 
\end{proposition}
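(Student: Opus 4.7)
The plan is to establish a bijective correspondence between one-parameter subgroups $\rho : \mathbb{C}^\ast \to GL(H^0(X, kL))$ and test configurations for $(X, L)$ of exponent $k$, by writing down mutually inverse constructions. Since $kL$ is very ample, the complete linear system gives a Kodaira embedding $X \hookrightarrow \mathbb{P} := \mathbb{P}(H^0(X, kL)^\ast)$. Given $\rho$, I define
\begin{equation*}
\mathcal{X}_\rho := \overline{\{ (\rho(t)\cdot x, t) : x \in X,\ t \in \mathbb{C}^\ast \}} \subset \mathbb{P} \times \mathbb{C},
\end{equation*}
take $\pi$ to be projection onto the second factor, and set $\mathcal{L} := \mathrm{pr}_1^\ast \mathcal{O}_\mathbb{P}(1)|_{\mathcal{X}_\rho}$. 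The diagonal $\mathbb{C}^\ast$-action $t \cdot (p, s) = (\rho(t) p, t s)$ preserves $\mathcal{X}_\rho$ and lifts canonically to $\mathcal{O}_\mathbb{P}(1)$, yielding the required linearization on $\mathcal{L}$. Flatness of $\pi$ holds because $\mathcal{X}_\rho$ is integral and dominates the smooth curve $\mathbb{C}$; $\pi$-very ampleness of $\mathcal{L}$ is automatic as it is the restriction of a relative hyperplane bundle; and the isomorphism $(\mathcal{X}_1, \mathcal{L}_1) \cong (X, kL)$ is built into the construction.

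Conversely, given $(\mathcal{X}, \mathcal{L})$, I push $\mathcal{L}$ forward to $\mathbb{C}$. Since $\pi$ is proper and $\mathcal{X}$ is irreducible (its generic fiber being isomorphic to $X$), the sheaf $E := \pi_\ast \mathcal{L}$ is a finitely generated torsion-free module over $\mathbb{C}[t]$; as $\mathbb{C}[t]$ is a PID, $E$ is free, of rank $N_k = h^0(X, kL)$. Its natural $\mathbb{C}^\ast$-equivariant structure splits it into weight subspaces $E = \bigoplus_\lambda E_\lambda$, and evaluation at $t = 1$ gives a decomposition $H^0(X, kL) = \bigoplus_\lambda V_\lambda$; I define $\rho$ by $\rho(t) v = t^\lambda v$ for $v \in V_\lambda$. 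Because $\mathcal{L}$ is $\pi$-very ample, the evaluation morphism produces a $\mathbb{C}^\ast$-equivariant closed embedding $\mathcal{X} \hookrightarrow \mathbb{P}(E^\ast) \cong \mathbb{P} \times \mathbb{C}$, where the action on the first factor is precisely that of $\rho$.

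To see the two constructions are inverse to each other, starting from $\rho$ and pushing forward $\mathcal{O}_\mathbb{P}(1)|_{\mathcal{X}_\rho}$ reproduces the weight decomposition defining $\rho$; starting from $(\mathcal{X}, \mathcal{L})$, the embedding $\mathcal{X} \hookrightarrow \mathbb{P}(E^\ast)$ identifies $\mathcal{X}$ with the closure in $\mathbb{P} \times \mathbb{C}$ of the $\rho$-orbit of $X$, since the two schemes agree over $\mathbb{C}^\ast$ and both are flat over $\mathbb{C}$. The main subtlety I anticipate lies in verifying that the \emph{scheme-theoretic} central fibers agree: this is where one uses flatness of $\pi$ together with the fact that $\mathcal{X}$ is the schematic (not merely set-theoretic) closure of $\mathcal{X} \setminus \mathcal{X}_0$ inside $\mathbb{P}(E^\ast)$, which forces the central fiber reconstructed from the orbit closure to coincide with $\mathcal{X}_0$. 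Modulo this point, the two assignments are manifestly mutually inverse and give the claimed equivalence.
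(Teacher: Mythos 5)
Your forward construction (one-parameter subgroup $\Rightarrow$ test configuration) is essentially identical to the paper's: the paper also takes the Zariski closure of the $\sigma^\ast$-orbit of $X\times\mathbb{C}^\ast$ in $\mathbb{P}H^0(X,kL)^\ast\times\mathbb{C}$ and sets $\mathcal{L}=\mathcal{O}_{\mathcal{X}}(1)$, leaving flatness and very ampleness implicit where you spell them out. For the converse the two arguments genuinely diverge: the paper does not argue via $\pi_\ast\mathcal{L}$ at this point, but instead defers to the regular Hermitian generator $\Theta_k$ of Theorem \ref{equivtriv} (Donaldson, Phong--Sturm), i.e.\ it fixes a Hermitian metric $H$ on $H^0(X,kL)$ and produces the one-parameter subgroup $\lambda(\tau)=\Theta_k\circ\rho_0(\tau)\circ\Theta_k^{-1}$, unique once $H$ is fixed because $A_k$ is required to be Hermitian for $H_k$. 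Your route through the equivariant structure on $E=\pi_\ast\mathcal{L}$ is metric-free and more elementary, and it is in fact the mechanism the paper itself uses later in Lemma \ref{extension} (``$\pi_\ast\mathcal{L}\to\mathbb{C}$ is $\mathbb{C}^\ast$-equivariantly trivial''); the price is that the resulting one-parameter subgroup is only canonical up to conjugation, whereas the paper's normalization via $H$ pins it down, which is what it needs to define the Bergman geodesic ray.

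One step of yours is wrong as literally written: the weight decomposition $E=\bigoplus_\lambda E_\lambda$ of the graded $\mathbb{C}[t]$-module does \emph{not} descend under evaluation at $t=1$ to a direct sum decomposition of $H^0(X,kL)$. For the rank-one module $E=\mathbb{C}[t]\cdot e$ with $e$ homogeneous, every graded piece $E_\lambda=\mathbb{C}\,t^{\lambda}e$ has the \emph{same} image in $E/(t-1)E$, so the sum of the images is far from direct. The correct move is to first invoke the $\mathbb{C}^\ast$-equivariant triviality of the free module $E$ over $\mathbb{A}^1$ (equivalently, choose homogeneous generators $e_1,\dots,e_{N_k}$ of weights $w_1,\dots,w_{N_k}$), evaluate \emph{those} at $t=1$ to get a basis $s_\alpha=e_\alpha(1)$ of $H^0(X,kL)$, and define $\rho(t)s_\alpha=t^{w_\alpha}s_\alpha$. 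Different choices of generators change $\rho$ by conjugation, consistent with the ambiguity noted above. With that repair your argument goes through and matches the standard proof in Ross--Thomas.
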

\begin{proof}
Let $\sigma\colon \mathbb{C}^\ast\to GL(H^0(X, kL))$ be a one-parameter subgroup and $\Phi_{|kL|}\colon X\hookrightarrow \mathbb{P} H^0(X, kL)^\ast$ the closed embedding defined by $|kL|$. 
We define $\mathcal{X}$ by the Zariski closure of the image under the embedding $X\times\mathbb{C}^\ast\hookrightarrow \mathbb{P} H^0(X, kL)^\ast\times \mathbb{C}$ defined by $(x, \tau)\mapsto (\sigma^\ast(\tau)\Phi_{|kL|}(x), \tau)$, that is, $\mathcal{X}_0$ is defined as the flat limit of the image of $X$ under $\sigma^\ast$ as $\tau\to 0$, 
and put $\mathcal{L}:=\mathcal{O}_{\mathcal{X}}(1)$. This gives a test configuration for $(X,L)$ of exponent $k$. The converse direction is spelled out below. 
\end{proof}

Let $(\mathcal{X}, \mathcal{L})$ be a test configuration for $(X, L)$ of exponent $k$. The $\mathbb{C}^\ast$-action $\rho$ on $(\mathcal{X}, \mathcal{L})$ induces an isomorphism
$
\rho(\tau, w)\colon H^0(\mathcal{X}_w, \mathcal{L}_w)\to H^0(\mathcal{X}_{\tau w}, \mathcal{L}_{\tau w})$
for any $\tau\in\mathbb{C}^\ast$ and $w\in\mathbb{C}$. Put
$\rho(\tau):=\rho(\tau, 1)\colon H^0(X, kL)\to H^0(\mathcal{X}_\tau, \mathcal{L}_\tau)$ and 
$\rho_0(\tau):=\rho(\tau, 0)\colon H^0(\mathcal{X}_0, \mathcal{L}_0)\to H^0(\mathcal{X}_0, \mathcal{L}_0)$. 
Let $A_k$ denote the infinitesimal generator of $\rho_0$. Fix a Hermitian metric $H\in\mathcal{B}_k$ on $H^0(X, kL)$. 

\begin{theorem}[{\cite[Lemma 2]{D05}, \cite[Lemma 2.1]{PS07}}]\label{equivtriv}
There exists an isomorphism $\Theta_k\colon H^0(\mathcal{X}_0, \mathcal{L}_0)\to H^0(X, kL)$ satisfying
\begin{enumerate}
\item $\Theta_k$ is derived from a $\mathbb{C}^\ast$-equivariant embedding
\begin{align*}
(\mathcal{X}, \mathcal{L})\hookrightarrow (\mathbb{P} H^0(\mathcal{X}_0, \mathcal{L}_0)\times\mathbb{C}, \mathcal{O}(1))
\end{align*}
whose restriction on the central fiber gives the closed embedding defined by $|\mathcal{L}_0|$$;$
\item\label{Hermite} $A_k$ is Hermitian with respect to $H_k:=\Theta_k^\ast H$. 
\end{enumerate}
The Hermitian metric $H_k$ is independent of $\Theta_k$. Moreover, such a $\Theta_k$ is unique up to an isometry on $(H^0(\mathcal{X}_0, \mathcal{L}_0), H_k)$ commuting with $\rho_0$. 
$\Theta_k$ is called a regular Hermitian generator. 
\end{theorem}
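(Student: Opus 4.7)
The plan is to combine equivariant triviality of $\pi_{\ast}\mathcal{L}$ over $\mathbb{A}^{1}$ with an equivariant Gram--Schmidt procedure driven by $H$. Because $\pi$ is flat and $\mathcal{L}$ is $\pi$-very ample, $\pi_{\ast}\mathcal{L}$ is a locally free sheaf of rank $N_{k}$ on $\mathbb{A}^{1}$, hence trivial, and together with the $\mathbb{C}^{\ast}$-action it becomes a $\mathbb{C}^{\ast}$-equivariant vector bundle. Since $\mathbb{C}^{\ast}$-equivariant vector bundles on $\mathbb{A}^{1}$ are classified by the representation at the fixed point, there is an equivariant isomorphism $\pi_{\ast}\mathcal{L}\cong V\otimes\mathcal{O}_{\mathbb{A}^{1}}$, where $V=H^{0}(\mathcal{X}_{0}, \mathcal{L}_{0})$ carries the representation $\rho_{0}$. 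Any such trivialization yields a $\mathbb{C}^{\ast}$-equivariant embedding $(\mathcal{X}, \mathcal{L})\hookrightarrow (\mathbb{P}V\times\mathbb{C}, \mathcal{O}(1))$ whose restriction at $\tau=0$ is the morphism defined by $|\mathcal{L}_{0}|$ and, restricting to the fibre at $\tau=1$, an isomorphism $\Theta_{k}^{(0)}\colon V\to H^{0}(X, kL)$ realising condition~(1).

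To achieve~(2), I would vary the trivialization. A direct calculation in the weight decomposition $V=\bigoplus_{\lambda}V_{\lambda}$ shows that any $\mathbb{C}^{\ast}$-equivariant automorphism of $V\otimes\mathcal{O}_{\mathbb{A}^{1}}$ has the form $g(\tau)=\sum_{\lambda\geq\mu}g_{\lambda\mu}\tau^{\lambda-\mu}$ with $g_{\lambda\mu}\colon V_{\mu}\to V_{\lambda}$, so that $g(1)$ is block triangular in the weight filtration and shifts $\Theta_{k}^{(0)}(V_{\mu})$ by arbitrary vectors in $\bigoplus_{\lambda>\mu}\Theta_{k}^{(0)}(V_{\lambda})$. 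Sweeping through the weights from highest to lowest, I would pick the off-diagonal blocks $g_{\lambda\mu}$ at each step so that the corrected subspace $\Theta_{k}(V_{\mu})$ becomes the $H$-orthogonal projection of $\Theta_{k}^{(0)}(V_{\mu})$ onto the orthogonal complement of the already-constructed higher-weight pieces $\bigoplus_{\lambda>\mu}\Theta_{k}(V_{\lambda})$. The resulting $\Theta_{k}$ sends the $\rho_{0}$-weight decomposition of $V$ to an $H$-orthogonal decomposition of $H^{0}(X, kL)$; since $A_{k}$ acts on each $V_{\lambda}$ by the real integer $\lambda$, it is automatically Hermitian with respect to $H_{k}:=\Theta_{k}^{\ast}H$.

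For the canonicity of $H_{k}$ and the uniqueness of $\Theta_{k}$, the key observation is that the weight filtration $F^{\geq\lambda}\subset H^{0}(X, kL)$ induced by the $\mathbb{C}^{\ast}$-action on $\pi_{\ast}\mathcal{L}$ is intrinsic to $(\mathcal{X}, \mathcal{L})$, so the $H$-orthogonal graded piece $F^{\geq\lambda}\cap (F^{\geq\lambda+1})^{\perp_{H}}$ is determined by the triple $(\mathcal{X}, \mathcal{L}, H)$ alone. Any valid $\Theta_{k}$ identifies this graded piece with $V_{\lambda}$, which forces $H_{k}$ to be independent of the choice of $\Theta_{k}$, and the remaining freedom is precisely the choice of identification of each $V_{\lambda}$ with its graded piece compatibly with $\rho_{0}$ and with $H_{k}|_{V_{\lambda}}$, i.e.\ an $H_{k}$-isometry commuting with $\rho_{0}$. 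The main obstacle I anticipate is the equivariant bookkeeping in the inductive step: one must verify that the off-diagonal corrections $g_{\lambda\mu}$ really assemble into a genuine $\mathbb{C}^{\ast}$-equivariant polynomial $g(\tau)\in\operatorname{End}(V)[\tau]$ of the prescribed weights, so that condition~(1)---agreement of the central fibre embedding with $|\mathcal{L}_{0}|$---is preserved rather than broken by a purely linear correction at $\tau=1$.
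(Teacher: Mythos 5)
Your argument is correct and coincides with the proof the paper delegates to \cite[Lemma 2]{D05} and \cite[Lemma 2.1]{PS07}: equivariant trivialization of $\pi_\ast\mathcal{L}$ over $\mathbb{A}^1$, followed by $H$-orthogonalization of the intrinsic weight filtration, with the residual freedom being exactly a block-diagonal isometry commuting with $\rho_0$. The obstacle you anticipate at the end is not a real one: any choice of strictly triangular blocks $g_{\lambda\mu}$ ($\lambda>\mu$) with identity diagonal part automatically assembles into the equivariant automorphism $g(\tau)=\operatorname{id}+\sum_{\lambda>\mu}g_{\lambda\mu}\tau^{\lambda-\mu}$, which is invertible for every $\tau$ (its value at any $\tau\neq 0$ is conjugate to the unipotent $g(1)$) and satisfies $g(0)=\operatorname{id}$, so the central-fibre embedding and hence condition (a) are untouched.
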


Using a regular Hermitian generator $\Theta_k$, we can define a one-parameter subgroup $\lambda\colon \mathbb{C}^\ast\to GL(H^0(X, kL))$, so that $\Theta_k$ is a $\mathbb{C}^\ast$-equivariant isomorphism. More concretely, for $\tau\in\mathbb{C}^\ast$, we define
\begin{align*}
\lambda(\tau):=\Theta_k\circ \rho_0(\tau)\circ \Theta_k^{-1}. 
\end{align*}
Note that $\lambda$ is independent of the choice of a regular Hermitian generator $\Theta_k$. Indeed, for another regular Hermitian generator $\Theta_k^\prime$, there exists a unitary endomorphism $U_k$ commuting with $\rho_0$ such that $\Theta_k=\Theta_k^\prime\circ U_k$. Then, 
\begin{align*}
\Theta_k\circ\rho_0(\tau)\circ\Theta_k^{-1}
&=(\Theta_k^\prime\circ U_k)\circ\rho_0(\tau)\circ(U_k^{-1}\circ\Theta_k^\prime{}^{-1})\\
&=\Theta_k^\prime\circ\rho_0(\tau)\circ\Theta_k^{\prime}{}^{-1}. 
\end{align*}
This $\lambda$ is the desired one-parameter subgroup corresponding to $(\mathcal{X}, \mathcal{L})$. 

Next, we show the way to associate a Bergman geodesic ray (i.e., a geodesic ray on $\mathcal{B}_k$) with 
a test configuration $(\mathcal{X}, \mathcal{L})$ of exponent $k$. Let $H$, $\Theta_k$ be as above. 
For $\tau\in\mathbb{C}^\ast$, we define a Hermitian metirc $H_\tau$ on $H^0(\mathcal{X}_\tau, \mathcal{L}_\tau)$ by
\begin{align*}
H_\tau:=((\rho(\tau)\circ \Theta_k\circ\rho_0(\tau^{-1}))^{-1})^\ast H_k
=(\rho(\tau)^{-1})^\ast\lambda(\tau)^\ast H. 
\end{align*}
Since $\lambda$ is independent of $\Theta_k$, so is $H_\tau$. 
Note that according to Theorem \ref{equivtriv} (\ref{Hermite}), $H$ is $S^1$-invariant. 
Then, we can use the real logarithmic coordinate $t=-\log|\tau|^2$ on the punctured unit disc $\Delta^\ast\subset\mathbb{C}$ centered at the origin. 
By means of the isomorphism $\rho(\tau)$, we get a geodesic
\begin{align*}
H_t:=\rho(\tau)^\ast H_\tau=\lambda(e^{-\frac{1}{2}t})^\ast H=e^{-tA}H
\end{align*}
on $\mathcal{B}_k$ parametrized by $t\in[0, \infty)$, where $A$ denotes the infinitesimal generator of $\lambda$. 
We will call it the Bergman geodesic ray associated with $(\mathcal{X} ,\mathcal{L})$ and $H$. 

Finally, we prove the following lemma for later use, which says that the Bergman type metric defined by $(H_\tau)_\tau$ extends to $\tau=0$. 
Put $(\mathcal{X}_\Delta, \mathcal{L}_\Delta):=(\pi^{-1}(\Delta), \mathcal{L}|_{\pi^{-1}(\Delta)})$, and
 $(\mathcal{X}_{\Delta^\ast}, \mathcal{L}_{\Delta^\ast}):=(\pi^{-1}(\Delta^\ast), \mathcal{L}|_{\pi^{-1}(\Delta^\ast)})$. 
\begin{lemma}\label{extension}
Let $(\mathcal{X}, \mathcal{L})$, $H$, and $(H_\tau)_\tau$ be as above. A locally bounded metric $\phi$ on $\mathcal{L}_{\Delta^\ast}$ defined by
\begin{align*}
\phi_\tau:=k FS_k(H_\tau), \quad \tau \in\Delta^\ast
\end{align*}
gives an $S^1$-invariant locally bounded metric on $\mathcal{L}_\Delta$ with positive curvature current. 
\end{lemma}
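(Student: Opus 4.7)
The plan is to realise $\phi$ as the pullback of a projective-space Fubini--Study metric along the equivariant embedding of Theorem \ref{equivtriv}. Set $V := H^0(\mathcal{X}_0, \mathcal{L}_0)$ and let $F \colon \mathcal{X} \hookrightarrow \mathbb{P} V^\ast \times \mathbb{C}$ denote that embedding, so that $F^\ast \mathcal{O}(1) = \mathcal{L}$, the restriction $F|_{\mathcal{X}_0}$ is the Kodaira embedding by $|\mathcal{L}_0|$, and $F$ intertwines $\rho$ with the action on $\mathbb{P} V^\ast$ induced by $\rho_0$. Let $FS(H_k)$ denote the Fubini--Study metric on $\mathcal{O}(1)$ associated with $H_k$, given locally by $\log\!\bigl(N_k^{-1}\sum_\alpha |e_\alpha|^2\bigr)$ for any $H_k$-orthonormal basis $(e_\alpha)$ of $V$. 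This is a smooth, positively curved metric, so its pullback $\tilde\phi := F^\ast FS(H_k)$ is a locally bounded metric on $\mathcal{L}_\Delta$ whose curvature current is positive (as the pullback of a smooth positive $(1,1)$-form under the holomorphic map $F$).

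It remains to show that $\tilde\phi$ agrees with $\phi$ over $\Delta^\ast$. For $\tau \in \Delta^\ast$ the fibrewise pullback
\[
F|_{\mathcal{X}_\tau}^\ast \colon V = H^0(\mathbb{P} V^\ast, \mathcal{O}(1)) \longrightarrow H^0(\mathcal{X}_\tau, \mathcal{L}_\tau)
\]
is a linear isomorphism. At $\tau = 0$ it is the identity on $V$, and at $\tau = 1$ the very construction of the regular Hermitian generator in Theorem \ref{equivtriv} gives $F|_{\mathcal{X}_1}^\ast = \Theta_k$ under the canonical identification $H^0(\mathcal{X}_1, \mathcal{L}_1) \cong H^0(X, kL)$. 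The $\mathbb{C}^\ast$-equivariance of $F$ then propagates to
\[
F|_{\mathcal{X}_\tau}^\ast = \rho(\tau) \circ \Theta_k \circ \rho_0(\tau^{-1}),
\]
which is precisely the isomorphism with respect to which $H_\tau$ is the push-forward of $H_k$. Thus $F|_{\mathcal{X}_\tau}^\ast$ sends an $H_k$-orthonormal basis of $V$ to an $H_\tau$-orthonormal basis of $H^0(\mathcal{X}_\tau, \mathcal{L}_\tau)$, and substituting into the Fubini--Study formula gives $\tilde\phi|_{\mathcal{L}_\tau} = k\,FS_k(H_\tau) = \phi_\tau$.

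The $S^1$-invariance is then immediate: Theorem \ref{equivtriv}(\ref{Hermite}) makes $H_k$, and hence $FS(H_k)$, $S^1$-invariant, and this invariance transports across the equivariant $F$ to $\tilde\phi$. The main non-formal ingredient in the plan is the identity $F|_{\mathcal{X}_1}^\ast = \Theta_k$, which amounts to unwinding how $\Theta_k$ is extracted from the embedding $F$ in Theorem \ref{equivtriv}; the remaining steps reduce to routine pullback computations and the functoriality of positive currents under holomorphic maps.
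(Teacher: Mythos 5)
Your argument is correct, but it reaches the conclusion by a different route than the paper. The paper fixes an $H$-orthonormal basis $(s_\alpha)$ of weight vectors for $\lambda$, writes $\phi_\tau=\log\bigl(\tfrac{1}{N_k}\sum_\alpha|\tau^{-m_\alpha}\rho(\tau)s_\alpha|^2\bigr)$, and then proves by hand (following Witt Nystr\"om) that each twisted section $w^{-m_\alpha}\overline{s_\alpha}$ extends holomorphically across $\mathcal{X}_0$, using the $\mathbb{C}^\ast$-equivariant triviality of $\pi_\ast\mathcal{L}$ to identify $w^{-m_\alpha}\overline{s_\alpha}$ with a global frame section $\sigma_\alpha$; local boundedness and positivity then come from the extended sections having no common zero on the central fiber. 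You instead outsource exactly this extension step to the equivariant embedding $F$ of Theorem \ref{equivtriv} and pull back the smooth Fubini--Study metric of $H_k$, reducing the lemma to the identity $F|_{\mathcal{X}_\tau}^\ast=\rho(\tau)\circ\Theta_k\circ\rho_0(\tau^{-1})$, which indeed follows from equivariance of $F$ together with $F|_{\mathcal{X}_1}^\ast=\Theta_k$, and which matches the defining formula $H_\tau=((\rho(\tau)\circ\Theta_k\circ\rho_0(\tau^{-1}))^{-1})^\ast H_k$ so that $F|_{\mathcal{X}_\tau}^\ast$ is an isometry onto $(H^0(\mathcal{X}_\tau,\mathcal{L}_\tau),H_\tau)$. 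The two proofs encode the same information --- the existence of the equivariant embedding over all of $\Delta$ is precisely the statement that the twisted sections extend --- but yours is cleaner and gets local boundedness and positivity of the curvature current for free from smoothness and positivity of $FS(H_k)$, at the cost of leaning on the precise normalization of $\Theta_k$ in Theorem \ref{equivtriv}; the paper's section-level computation is more self-contained and makes the weight decomposition explicit, which is also what one wants when computing slopes later. One small point to make explicit in your write-up: the $S^1$-invariance of $FS(H_k)$ uses that $\rho_0(e^{i\theta})=e^{i\theta A_k}$ is $H_k$-unitary because $A_k$ is $H_k$-Hermitian by Theorem \ref{equivtriv}(\ref{Hermite}).
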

\begin{proof}
Fix an $H$-orthonormal basis $(s_\alpha)$ for $H^0(X, -kK_X)$ consisting of the weight vectors of $\lambda$: 
\begin{align*}
\lambda(\tau)s_\alpha=\tau^{m_\alpha}s_\alpha, 
\end{align*}
where $m_\alpha$ is a weight of $\lambda$. Then, $(\tau^{-m_\alpha}\rho(\tau)s_\alpha)$ is an $H_\tau$-orthonomal basis. Hence, 
\begin{align*}
\phi_\tau=\log\left(
\frac{1}{N_k}\sum_{\alpha=1}^{N_k}|\tau^{-m_\alpha}\rho(\tau)s_\alpha|^2
\right). 
\end{align*}
It suffices to show that each $\tau^{-m_\alpha}\rho(\tau)s_\alpha$ extends holomorphically to $\tau=0$. We follow the argument of \cite[Lemma 6.1]{WN12}. 
To begin with, we prepare the notations. Define a holomorphic section $\overline{s_\alpha}\in H^0(\mathcal{X}\setminus\mathcal{X}_0, \mathcal{L})$ by
\begin{align*}
\overline{s_\alpha}(\rho(\tau)x):=\rho(\tau)s_\alpha(x), \quad \tau\in\mathbb{C}^\ast, x\in X. 
\end{align*}
Let $w$ denote the global coordinate on $\mathbb{C}$ and be identified with the projection $\mathcal{X}\to\mathbb{C}$. We also regard it as a section of the trivial line bundle over $\mathcal{X}$. Then $w^{-m_\alpha}\overline{s_\alpha}$ is a holomorphic section of $\mathcal{L}$ over $\mathcal{X}\setminus\mathcal{X}_0$ and $w^{-m_\alpha}\overline{s_\alpha}=w^{-m_\alpha}\rho(w)s_\alpha$ for any $w\in\mathbb{C}^\ast$. We now prove the claim. 
Since $\pi_\ast\mathcal{L}\to\mathbb{C}$ is $\mathbb{C}^\ast$-equivariantly trivial, there exist global sections $\sigma_1, \ldots, \sigma_{N_k}$ of $\pi_\ast\mathcal{L}$ such that
\begin{enumerate}
\item for each $w\in\mathbb{C}$, $(\sigma_1(w), \ldots, \sigma_{N_k}(w))$ is a basis for $H^0(\mathcal{X}_w, \mathcal{L}_w)$;  
\item there exists an invertible matrix $(f_{\alpha\beta}(\tau))$ with coefficients in $\mathbb{C}[\tau, \tau^{-1}]$ satisfying
\begin{align}\label{sigma}
\rho(\tau)\sigma_\alpha=\sum_\beta f_{\alpha\beta}(\tau)\sigma_\beta, 
\end{align}
for any $\tau\in\mathbb{C}^\ast$. 
\end{enumerate}
We may assume that $\sigma_\alpha(1)=s_\alpha$ for $\alpha=1, \ldots, N_k$. Then, 
\begin{align*}
\rho_0(\tau)\sigma_\alpha(0)=\tau^{m_\alpha}\sigma_\alpha(0). 
\end{align*}
 On the other hand, restricting (\ref{sigma}) to the central fiber gives
\begin{align*}
\rho_0(\tau)\sigma_\alpha(0)=\sum_\beta f_{\alpha\beta}(\tau)\sigma_\beta(0). 
\end{align*}
Combining them, we have $f_{\alpha\beta}(\tau)=\tau^{m_\alpha}\delta_{\alpha\beta}$. Hence, for any $w\in\mathbb{C}^\ast$, 
\begin{align*}
\overline{s_\alpha}(w)
=(\rho(w)s_\alpha)(\tau)
=(w^{m_\alpha}\sigma_\alpha)(w), 
\end{align*}
so that $w^{-m_\alpha}\overline{s_\alpha}=\sigma_\alpha$. $\sigma_\alpha$ is holomorphic on $\mathbb{C}$, so is $w^{-m_\alpha}\overline{s_\alpha}$ as desired. \qedhere
\end{proof}

%%%%%%%%%%%%%%%%%%%%%%%%%%%%%%%%%%%%%%%%%%%%
\subsection{Chow weights and Donaldson-Futaki invariants}\label{Chow&DF}
In this section, we recall the definition of Chow weights and Donaldson-Futaki invariants. 

Let $(X, L)$ be an $n$-dimensional polarized manifold and $(\mathcal{X}, \mathcal{L})$ a test configuration for $(X, L)$ of exponent $k$. Denote by $N_{km}$ the dimension of $H^0(\mathcal{X}_0, m\mathcal{L}_0)$ and by $w_{km}$ the total weight of the $\mathbb{C}^\ast$-action on  $H^0(\mathcal{X}_0, m\mathcal{L}_0)$ induced by that on $(\mathcal{X}, \mathcal{L})$. For large $m$, we have expansions: 
\begin{align*}
N_{km}&=a_0(km)^n+a_1(km)^{n-1}+\cdots+a_n, \\
w_{km}&=b_0(km)^{n+1}+b_1(km)^n+\cdots+b_{n+1}. 
\end{align*}
The Chow weight of $(\mathcal{X}, \mathcal{L})$ is defined by
\begin{align*}
Chow_k(\mathcal{X}, \mathcal{L}):=\frac{b_0}{a_0}-\frac{w_k}{kN_k}. 
\end{align*}
The Donaldson-Futaki invariant of $(\mathcal{X}, \mathcal{L})$ is
\begin{align*}
DF(\mathcal{X}, \mathcal{L}):=2\frac{a_1b_0-a_0b_1}{a_0^2}. 
\end{align*}
Note that these invariants are independent of the choice of a $\mathbb{C}^\ast$-linearization of $\mathcal{L}$. We also note that the Donaldson-Futaki invariant is unchanged by replacing $\mathcal{L}$ with a tensor power, while the Chow weight is not. In fact, we have
\begin{align*}
Chow_{km}(\mathcal{X}, m\mathcal{L})=\frac{b_0}{a_0}-\frac{w_{km}}{kmN_{km}}, 
\end{align*}
from which one can easily get 
\begin{align}\label{Chow-DF}
\lim_{m\to\infty} km\,Chow_{km}(\mathcal{X}, m\mathcal{L})=\frac{1}{2}DF(\mathcal{X}, \mathcal{L}). 
\end{align}

With these invariants above, we can define Chow stability and K-semistability of polarized manifolds. 
\begin{definition}
A polarized manifold $(X, L)$ is said to be
\begin{enumerate}
\renewcommand{\labelenumi}{(\roman{enumi})}
\item
\begin{enumerate}
\item Chow semistable at level $k$ if $Chow_k(\mathcal{X}, \mathcal{L})\ge 0$ holds for any test configuration $(\mathcal{X}, \mathcal{L})$ for $(X, -K_X)$ of exponent $k$. 
\item Chow polystable at level $k$ if $(X, L)$ is Chow semistable at level $k$ and $Chow_k(\mathcal{X}, \mathcal{L})=0$ if and only if $(\mathcal{X}, \mathcal{L})$ is product. 
\item Chow stable at level $k$ if $(X, L)$ is Chow semistable at level $k$ and $Chow_k(\mathcal{X}, \mathcal{L})=0$ if and only if $(\mathcal{X}, \mathcal{L})$ is trivial.  
\item asymptotically Chow polystable $($resp. stable or semistable$)$ if there exists a $k_0>0$ such that $(X, L)$ is Chow polystable $($resp. stable or semistable$)$ at level $k$ for all $k\ge k_0$. 
\end{enumerate}
\item K-semistable if $DF(\mathcal{X}, \mathcal{L})\ge 0$ holds for any test configuration $(\mathcal{X}, \mathcal{L})$ for $(X, -K_X)$. 
\end{enumerate}
\end{definition}

Note that (\ref{Chow-DF}) gives the following relation between two semistabilities:

\begin{proposition}[{\cite[Theorem 3.9]{RT07}}]\label{Chow->K}
Asymptotic Chow semistability implies K-semistability. 
\end{proposition}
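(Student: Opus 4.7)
The plan is to reduce the inequality $DF(\mathcal{X},\mathcal{L}) \geq 0$ for an arbitrary test configuration to the Chow weight inequalities for many test configurations at once, via the limit formula (\ref{Chow-DF}).

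First I would fix an arbitrary test configuration $(\mathcal{X},\mathcal{L})$ for $(X,L)$ of some exponent $k$. The key observation is that for any positive integer $m$, the pair $(\mathcal{X}, m\mathcal{L})$ is again a test configuration for $(X,L)$, now of exponent $km$: the flat family and $\mathbb{C}^*$-action are unchanged, and the isomorphism on the fiber over $1$ becomes $(\mathcal{X}_1, m\mathcal{L}_1) \cong (X, kmL)$, with $m\mathcal{L}$ still $\pi$-very ample since $\mathcal{L}$ is. Thus $(\mathcal{X}, m\mathcal{L})$ is a legitimate object to which the Chow semistability assumption at level $km$ can be applied.

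Next, by asymptotic Chow semistability of $(X,L)$ there exists $k_0$ such that Chow semistability holds at every level $\ell \geq k_0$. Choosing $m$ large enough that $km \geq k_0$, we obtain
\begin{align*}
Chow_{km}(\mathcal{X}, m\mathcal{L}) \geq 0 \quad \text{for all } m \gg 1.
\end{align*}
Multiplying by the positive quantity $km$ and letting $m \to \infty$, the limit formula (\ref{Chow-DF}) yields
\begin{align*}
\tfrac{1}{2}\,DF(\mathcal{X},\mathcal{L}) \;=\; \lim_{m\to\infty} km\, Chow_{km}(\mathcal{X}, m\mathcal{L}) \;\geq\; 0,
\end{align*}
so $DF(\mathcal{X},\mathcal{L}) \geq 0$. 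Since the test configuration $(\mathcal{X},\mathcal{L})$ was arbitrary, $(X,L)$ is K-semistable.

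There is no real obstacle here, as formula (\ref{Chow-DF}) does the substantive work; the only point requiring mild care is to verify that passing from exponent $k$ to exponent $km$ (by tensoring $\mathcal{L}$ with itself $m$ times) genuinely produces a test configuration in the sense of the definition above, so that asymptotic Chow semistability is actually applicable. Once this is in hand, the argument is a one-line limit from the expansion of $Chow_{km}(\mathcal{X}, m\mathcal{L})$ computed in the preceding paragraph of the paper.
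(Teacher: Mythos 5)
Your proof is correct and is exactly the argument the paper intends: it deduces $DF(\mathcal{X},\mathcal{L})\ge 0$ from $Chow_{km}(\mathcal{X},m\mathcal{L})\ge 0$ for $m\gg 1$ via the limit formula (\ref{Chow-DF}). The only addition is your explicit check that $(\mathcal{X},m\mathcal{L})$ is a test configuration of exponent $km$, which the paper leaves implicit.
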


We briefly explain how higher Futaki invariants
 $\mathcal{F}_{\mathrm{Td}^{(1)}}, \ldots, \mathcal{F}_{\mathrm{Td}^{(n)}}$, obstructions to asymptotic Chow semistability, are related to Chow weights, following Della Vedova-Zuddas \cite[Proposition 2.2]{DVZ12}. 
Let $V$ be a holomorphic vector field on $X$ whose real part generates $S^1$. 
As explained in Proposition \ref{1-PS}, $V$ defines a product configuration $(\mathcal{X}, \mathcal{L})$ for $(X, L)$ of exponent $k$. 
Using the equivariant Riemann-Roch theorem, we get
\begin{align}\label{Chow&hFut}
Chow_{km}(\mathcal{X}, m\mathcal{L})
&=-\frac{b_{n+1}}{kmN_{km}}-\frac{a_0}{kmN_{km}}\sum_{p=1}^n
\frac{a_0b_p-a_pb_0}{a_0^2}\,(km)^{n+1-p}\\
&=-\frac{1}{kmN_{km}}\sum_{p=1}^n\frac{(km)^{n+1-p}}{(n+1-p)!}\mathcal{F}_{\mathrm{Td}^{(p)}}(V)\notag,  
\end{align}
for sufficiently large $m$. Note that the smoothness of $X$ implies $b_{n+1}=0$. 

We end this section by defining norms of test configurations for later use. Let $p\ge 1$. 
Given a test configuration as above, denote by $A_{km}$ the infinitesimal generator of the $\mathbb{C}^\ast$-action on $H^0(\mathcal{X}_0, m\mathcal{L}_0)$ and by $\underline{A}_{km}$ the trace-free part of $A_{km}$. 
We define the $p$-norm $||(\mathcal{X}, \mathcal{L})||_p$ to be the $p$-th root of the leading coefficient in
\begin{align*}
\tr(\underline{A}_{km}^p)
=||(\mathcal{X}, \mathcal{L})||_p^p (km)^{n+p}+O(m^{n+p-1})
\end{align*}
for large $m$. This is unchanged if we replace $\mathcal{L}$ by a power. 

%%%%%%%%%%%%%%%%%%%%%%%%%%%%%%%%%%%%%%%%%%%%
\subsection{Kempf-Ness type functionals and its quantizations}
The aim of this section is to recall the definition of energy functionals. 
Let $X$ be an $n$-dimensional Fano manifold. Fix a Hermitian metric $\phi_0\in\mathcal{H}(X. -K_X)$ and put $\omega_0:=\omega_{\phi_0}$. 
For a smooth Hermitian metric $\phi\in\mathcal{H}(X, -K_X)$, 
we define the \emph{Monge-Amp\`ere energy} $\mathcal{E}$ and the \emph{Ding functional} $D$ by
\begin{align*}
\mathcal{E}(\phi)&:=\frac{1}{n+1}\sum_{i=0}^n\int_X(\phi-\phi_0)\, \omega_\phi^{n-i}\wedge\omega_0^i, \\
D(\phi)&:=-\frac{1}{(-K_X)^n}\mathcal{E}(\phi)+\mathcal{L}(\phi), \quad \mathcal{L}(\phi):=-\log\int_X e^{-\phi}. 
\end{align*}

For a Hermitian metric $H\in\mathcal{B}_k$, we also define the \emph{quantized Monge-Amp\`ere energy} $\mathcal{E}^{(k)}$, the \emph{balancing energy} $Z_k$, and the \emph{quantized Ding functional} $\mathcal{D}^{(k)}$ by
\begin{align*}
\mathcal{E}^{(k)}(H)&:=-\frac{1}{kN_k}\log\det H, \\
Z_k(H)&:=\frac{(-K_X)^n}{n!}k^{n+1}\left(\frac{1}{(-K_X)^n}\mathcal{E}(FS_k(H))-\mathcal{E}^{(k)}(H)
\right), \\
D^{(k)}&:=-\mathcal{E}^{(k)}(H)+\mathcal{L}(FS_k(H)), 
\end{align*}
where the determinant is taken with respect to $Hilb_k(\phi_0)$. 

We collect some properties of these functionals. 
\begin{proposition}\label{facts}
Let $H\in\mathcal{B}_k$ be a Hermitian metric on $H^0(X, -kK_X)$. 
\begin{enumerate}
\item\label{crit} $H$ is a critical point of $D^{(k)}$ if and only if $H$ is an anti-canonically $k$-balanced metric. 
\item \label{convexity} $D^{(k)}$ is convex along Bergman geodesic rays. 
\item\label{D^k} We have
\begin{align*}
D^{(k)}(H)=D(FS_k(H))+\frac{n!}{k^{n+1}(-K_X)^n} Z_k(H). 
\end{align*}
\item\label{product} Let $(\mathcal{X}, \mathcal{L})$ be a test configuration for $(X, -K_X)$ of exponent $k$ and $(H_t)_t$ the Bergman geodesic ray associated with $(\mathcal{X}, \mathcal{L})$ and $H$. 
If $D^{(k)}(H_t)$ is affine in $t$ on $[0, \infty)$, then $(\mathcal{X}, \mathcal{L})$ is a product configuration. 
\end{enumerate}
\end{proposition}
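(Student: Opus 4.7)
My plan is to handle the four parts in the order (c), (a), (b), (d), so that the algebraic identity comes first, the variational calculation next, and the two analytic statements (which rest on a common mechanism) last. Parts (b) and (d) will both rely on Berndtsson's theorem on the (pluri)subharmonic variation of the Ding direct image, applied to the $S^1$-invariant psh metric on $\mathcal{L}_\Delta$ furnished by Lemma \ref{extension}.

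For (c), I just unwind the definitions. Subtracting $D(FS_k(H)) = -\frac{1}{(-K_X)^n}\mathcal{E}(FS_k(H)) + \mathcal{L}(FS_k(H))$ from $D^{(k)}(H) = -\mathcal{E}^{(k)}(H) + \mathcal{L}(FS_k(H))$, the $\mathcal{L}$-terms cancel and what remains is $\frac{1}{(-K_X)^n}\mathcal{E}(FS_k(H)) - \mathcal{E}^{(k)}(H) = \frac{n!}{k^{n+1}(-K_X)^n}Z_k(H)$, which is the asserted identity.

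For (a), I compute the first variation of $D^{(k)}$ at $H$ in the direction of an arbitrary Hermitian endomorphism $B$. Pick an $H$-orthonormal basis $(s_\alpha)$ diagonalizing $B$ with eigenvalues $b_\alpha$, and set $\phi := FS_k(H)$. A short bookkeeping of the changing orthonormal basis gives
\[
\frac{d}{ds}\bigg|_{s=0}FS_k(H+sB) = -\frac{1}{k}\,\frac{\sum_\alpha b_\alpha|s_\alpha|^2_{k\phi}}{\sum_\alpha|s_\alpha|^2_{k\phi}},
\]
and combining $\delta\mathcal{L}(\phi)[\delta\phi] = \int_X \delta\phi\,d\mu_\phi$ with the identity $\sum_\alpha|s_\alpha|^2_{k\phi} = N_k$ (which holds because $\phi = FS_k(H)$ and $(s_\alpha)$ is $H$-orthonormal) yields $\frac{d}{ds}\big|_{s=0}\mathcal{L}(FS_k(H+sB)) = -\frac{1}{kN_k}\sum_\alpha b_\alpha\langle s_\alpha,s_\alpha\rangle_{Hilb_k(\phi)}$. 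Together with $\frac{d}{ds}\big|_{s=0}(-\mathcal{E}^{(k)}(H+sB)) = \frac{1}{kN_k}\sum_\alpha b_\alpha$, this produces $\delta D^{(k)}(H)[B] = \frac{1}{kN_k}\sum_\alpha b_\alpha\bigl(1 - \langle s_\alpha,s_\alpha\rangle_{Hilb_k(\phi)}\bigr)$. Demanding vanishing for every Hermitian $B$ (allowing both the eigenvalues and the diagonalizing basis to vary) is equivalent to $Hilb_k(\phi) = H$, the balanced equation.

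For (b), along $H_t = e^{-tA}H$ one has $\log\det H_t = \log\det H - t\,\tr A$, so $-\mathcal{E}^{(k)}(H_t)$ is affine in $t$ and convexity of $D^{(k)}(H_t)$ reduces to convexity of $t\mapsto\mathcal{L}(FS_k(H_t))$. By Lemma \ref{extension}, $\phi_\tau := kFS_k(H_\tau)$, $\tau\in\Delta^\ast$, extends to an $S^1$-invariant locally bounded psh metric on $\mathcal{L}_\Delta$, and Berndtsson's direct image theorem gives that $\tau \mapsto -\log\int_X e^{-\phi_\tau/k}$ is psh on $\Delta$. As an $S^1$-invariant psh function it depends only on $|\tau|$ and is therefore convex in $t = -\log|\tau|^2$, which is (b). For (d), if $D^{(k)}(H_t)$ is affine on $[0,\infty)$ then so is $\mathcal{L}(FS_k(H_t))$, and the psh function above becomes harmonic on $\Delta$. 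The equality case of Berndtsson's theorem then forces $\phi_\tau$ to come from a holomorphic trivialization of $(\mathcal{X}_\Delta,\mathcal{L}_\Delta)$ intertwining the $\mathbb{C}^\ast$-action with the one on $(X\times\Delta, p_1^\ast(-kK_X))$; by $\mathbb{C}^\ast$-equivariance this trivialization spreads out to an isomorphism over $\mathbb{C}$, exhibiting $(\mathcal{X},\mathcal{L})$ as a product configuration.

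The main obstacle is (d): one must invoke the rigidity statement in Berndtsson's theorem in the precise form that harmonicity of the Ding direct image on the disc forces a holomorphic splitting of the underlying family, and then verify that the resulting disc trivialization extends $\mathbb{C}^\ast$-equivariantly to a global isomorphism $\mathcal{X}\cong X\times\mathbb{C}$ of test configurations. The other three parts are essentially formal once the variational identities and Lemma \ref{extension} are in hand.
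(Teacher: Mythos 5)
Parts (a), (b) and (c) of your proposal are fine: (c) is the same trivial cancellation the paper uses, your first-variation computation in (a) is correct and is exactly the content of the paper's Proposition \ref{derivative}, and your proof of (b) via the affineness of $-\mathcal{E}^{(k)}$ plus Berndtsson's subharmonicity theorem for $\tau\mapsto-\log\int_Xe^{-\phi_\tau/k}$ is essentially the argument of \cite{BBGZ13} that the paper cites.

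The gap is in (d). You reduce affineness of $D^{(k)}(H_t)$ to harmonicity of the Ding direct image and then invoke ``the equality case of Berndtsson's theorem'' to conclude that the family splits holomorphically. But the rigidity statement you need is not an off-the-shelf result in this generality: the metric $\phi$ on $\mathcal{L}_\Delta$ produced by Lemma \ref{extension} is only locally bounded, the central fiber $\mathcal{X}_0$ of an arbitrary exponent-$k$ test configuration can be non-normal and even non-reduced, and Berndtsson's rigidity (as used, e.g., by Berman in \cite{B16}) is established for smooth or special situations and even there constitutes the hard part of the argument. Moreover, over $\Delta^\ast$ the family is already trivial, so harmonicity on the punctured disc at best yields a family of automorphisms of $X$ moving the metrics $FS_k(H_t)$; turning that into the statement that the flat limit $\mathcal{X}_0$ is isomorphic to $X$ --- i.e., that the degeneration itself is trivial --- is an additional nontrivial step that you only assert. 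You correctly flag this as ``the main obstacle,'' but it is not a technical verification: it is the whole difficulty, and no reference supplies it in the required generality.

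The paper avoids this entirely by a different decomposition. Using (c), $D^{(k)}\circ(H_t)=D(FS_k(H_t))+\tfrac{n!}{k^{n+1}(-K_X)^n}Z_k(H_t)$ is a sum of two functions that are each convex in $t$ (the first by Berndtsson-type convexity of the Ding functional along the subgeodesic $FS_k(H_t)$, the second by Donaldson/Fine), so affineness of the sum forces $Z_k(H_t)$ to be affine. The explicit second-derivative formula
\begin{align*}
\frac{d^2}{dt^2}Z_k(H_t)=\frac{k^n}{n!}\int_X|V_A^\perp|^2_{k\omega_{FS_k(H_t)}}\,\omega^n_{FS_k(H_t)}
\end{align*}
from \cite[Lemma 17]{F10} then shows that $V_A$ is everywhere tangent to $\Phi_{|-kK_X|}(X)\subset\mathbb{P}H^0(X,-kK_X)^\ast$, so the $\mathbb{C}^\ast$-orbit closure of $X$ is $X\times\mathbb{C}$ and the flat limit is $X$ itself; this is a finite-dimensional, elementary argument requiring only the convexity (not the rigidity) of the Ding piece. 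If you want to salvage your route, you would need to prove the Berndtsson rigidity for locally bounded metrics on singular test configurations over the disc, which is well beyond the scope of this proposition; otherwise, replace your argument for (d) by the $Z_k$-based one.
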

\begin{proof}
(\ref{crit}) and (\ref{convexity}) were proved in Lemma 7.4 and Lemma 6.5 of \cite{BBGZ13}, respectively. One could also prove them using Proposition \ref{derivative}. 
(\ref{D^k}) is trivial. 
We now start the proof of (\ref{product}). 
By combining our assumption on $D^{(k)}(H_t)$ with the convexity of $D\circ FS_k$ and $Z_k$, (\ref{D^k}) shows that $Z_k(H_t)$ is affine in $t$. To complete the proof, we need the explicit formula for the second derivative of $Z_k(H_t)$. 
Denote $H_t=e^{-tA}H$. Let $V_A$ be the holomorphic vector field on $\mathbb{P} H^0(X, -kK_X)^\ast$ defined by the Hermitian matrix $A$ and $V_A^\perp$ the normal part of $V_A$ with respect to the Fubini-Study metric induced by $H_t$.  
It was proved in \cite[Lemma 17]{F10} that
\begin{align*}
\frac{d^2}{dt^2}Z_k(H_t)=\frac{k^n}{n!}\int_X |V_A^\perp|^2_{k\omega_{FS_k(H_t)}}\omega^n_{FS_k(H_t)}. 
\end{align*}
This implies that $V_A$ is tangent to the image of $X$ under the closed embedding $X\hookrightarrow \mathbb{P} H^0(X, -kK_X)^\ast$, so that the central fiber $\mathcal{X}_0$ is isomorphic to $X$ by the proof of Proposition \ref{1-PS}. 
\end{proof}

%%%%%%%%%%%%%%%%%%%%%%%%%%%%%%%%%%%%%%%%%%%%
\section{Quantized Futaki invariants and F-stability}
In this section we introduce quantized Futaki invariants and F-stability. 

Let $X$ be an $n$-dimensional Fano manifold. Fix $k\ge 1$ so that $-kK_X$ is very ample. 

\begin{definition}
Given a test configuration $(\mathcal{X}, \mathcal{L})$ for $(X, -K_X)$ of exponent $k$, the quantized Futaki invariant at level $k$ is defined to be
\begin{align*}
Fut_k(\mathcal{X}, \mathcal{L}):=kN_k(DF(\mathcal{X}, \mathcal{L})+Chow_k(\mathcal{X}, \mathcal{L})). 
\end{align*}
\end{definition}
We remark that this invariant is independent of the choice of a $\mathbb{C}^\ast$-linearization of $\mathcal{L}$, since so are the Donaldson-Futaki invariant  and the Chow weight. 

The following lemma explains why we call $Fut_k(\mathcal{X}, \mathcal{L})$ the quantized Futaki invariant. 
\begin{lemma}\label{BW}
If $(\mathcal{X}, \mathcal{L})$ is a special test configuration of exponent $k$, then $Fut_k(\mathcal{X}, \mathcal{L})$ coincides with the quantized Futaki invariant introduced by Berman-Witt Nystr\"om in \cite[Section 4.4]{BW14}. 
\end{lemma}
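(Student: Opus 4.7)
\textbf{Proof plan for Lemma \ref{BW}.} The plan is to directly unwind both formulas and verify they coincide; there is no geometric content beyond matching normalizations, since Berman--Witt Nystr\"om's invariant is already built out of the same weight data that appears in the definitions of $DF$ and $Chow_k$.

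First, I would review the definition in \cite[Section 4.4]{BW14}. In their setting, since $(\mathcal{X},\mathcal{L})$ is special, the central fiber $\mathcal{X}_0$ is a normal $\mathbb{Q}$-Fano variety with at worst log terminal singularities, and $\mathcal{L}_0$ can be identified with $-kK_{\mathcal{X}_0}$ (as $\mathbb{Q}$-line bundles with $\mathbb{C}^\ast$-linearizations). They then define their quantized Futaki invariant as the slope at infinity of the quantized Ding functional $D^{(k)}$ along the Bergman geodesic ray associated with $(\mathcal{X},\mathcal{L})$ and a fixed $H\in\mathcal{B}_k$, which by convexity (Proposition \ref{facts}(\ref{convexity})) and $S^1$-invariance depends only on the generator $A$.

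Second, I would compute the slope by splitting $D^{(k)}(H_t)=-\mathcal{E}^{(k)}(H_t)+\mathcal{L}(FS_k(H_t))$. The $\mathcal{E}^{(k)}$-part is immediate and $t$-affine:
\[
\frac{d}{dt}\mathcal{E}^{(k)}(e^{-tA}H)=\frac{\tr A}{kN_k}=\frac{w_k}{kN_k}.
\]
The $\mathcal{L}$-part is the one for which specialness matters: using the extension Lemma \ref{extension}, the family $FS_k(H_\tau)$ gives a locally bounded metric on $\mathcal{L}_\Delta$ with positive curvature current, so one can integrate $e^{-FS_k(H_t)}$ on the klt central fiber $\mathcal{X}_0$ (this is where the klt hypothesis and $\mathcal{L}_0\cong -kK_{\mathcal{X}_0}$ are used, to guarantee convergence and reduce the $\mathcal{L}$-asymptotics to weight data). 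Berman--Witt Nystr\"om's asymptotic expansion identifies the $\mathcal{L}$-slope at infinity with the combination
\[
\frac{b_0}{a_0}+\frac{2(a_1b_0-a_0b_1)}{a_0^2}
\]
of the leading coefficients of $N_{km}$ and $w_{km}$ from Section \ref{Chow&DF}.

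Third, I would match the two formulas. Collecting the slopes gives
\[
\frac{d}{dt}\Big|_{t=\infty} D^{(k)}(H_t)=-\frac{w_k}{kN_k}+\frac{b_0}{a_0}+\frac{2(a_1b_0-a_0b_1)}{a_0^2}=Chow_k(\mathcal{X},\mathcal{L})+DF(\mathcal{X},\mathcal{L}),
\]
which upon multiplying by $kN_k$ is exactly $Fut_k(\mathcal{X},\mathcal{L})$ by definition. Therefore the two invariants agree.

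The main obstacle is the last part of step two: identifying the asymptotic slope of $\mathcal{L}\circ FS_k$ with $b_0/a_0+2(a_1b_0-a_0b_1)/a_0^2$. This requires applying equivariant Riemann--Roch on the klt central fiber $\mathcal{X}_0$ (legitimate because klt implies rational singularities, so higher cohomology vanishes for $m\gg 1$ and $N_{km}=\chi(\mathcal{X}_0,m\mathcal{L}_0)$), and then carefully tracking the exponent-$k$ versus exponent-$1$ normalizations to reproduce exactly the Chow and Donaldson--Futaki terms rather than their $m\to\infty$ limits. Once this slope computation is done, the matching is purely formal.
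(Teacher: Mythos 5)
There is a genuine gap, and it sits at the very first step: the definition you attribute to Berman--Witt Nystr\"om is not theirs. In \cite[Section 4.4]{BW14} the quantized Futaki invariant of a special test configuration (equipped with the canonical linearization of $\mathcal{L}\cong -kK_{\mathcal{X}/\mathbb{C}}$) is defined purely algebraically, as \emph{minus the total weight} $-w_k$ of the induced $\mathbb{C}^\ast$-action on $H^0(\mathcal{X}_0, -kK_{\mathcal{X}_0})$; it is not defined as the slope at infinity of $D^{(k)}$ along a Bergman geodesic. Once the definition is stated correctly, the content of Lemma \ref{BW} is the algebraic identity $Fut_k(\mathcal{X},\mathcal{L})=-w_k$, which, unwinding $Fut_k=kN_k(DF+Chow_k)$ and $Chow_k=b_0/a_0-w_k/(kN_k)$, is equivalent to the cancellation $DF(\mathcal{X},\mathcal{L})=-b_0/a_0$. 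Establishing that identity for a special test configuration with the canonical linearization is the whole point of the proof (the paper does it by two-term asymptotic Riemann--Roch on the compactification $\overline{\mathcal{X}}\to\mathbb{P}^1$, yielding $b_1=\tfrac{n+1}{2}b_0$ and $a_1=\tfrac{n}{2}a_0$), and it appears nowhere in your argument.

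Even granting your reading of the definition, the proposal has two further problems. First, your own bookkeeping is inconsistent: your slope computation produces $Chow_k+DF=Fut_k/(kN_k)$, and you multiply by $kN_k$ at the end with no justification from the purported definition, so under your premises you would be proving that the two invariants differ by the factor $kN_k$. Second, what you are actually computing is Theorem \ref{slope} restricted to special test configurations (where $q=0$), with the hardest ingredient --- the identification of the asymptotic slope of $\mathcal{L}\circ FS_k$ with $b_0/a_0+2(a_1b_0-a_0b_1)/a_0^2$, i.e.\ Berman's Theorem \ref{Dslope} combined with Theorem \ref{Zslope} --- taken as a black box. That route makes Lemma \ref{BW} circular with respect to the slope formula rather than the elementary weight-counting statement it is meant to be; the correct proof needs no analysis at all.
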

\begin{remark}
Before giving the proof, we should recall the definition of quantized Futaki invariants by Berman-Witt Nystr\"om. Let $(\mathcal{X}, \mathcal{L})$ be a special test configuration of exponent $k$. 
By \cite[Lemma 2.2]{B16}, $\mathcal{X}$ is a normal $\mathbb{Q}$-Gorenstein variety, and $\mathcal{L}$ is isomorphic to the relative pluri-anti-canonical divisor $-kK_{\mathcal{X}/\mathbb{C}}$. 
Then, we can lift the $\mathbb{C}^\ast$-action on $\mathcal{X}$ automatically to the tangent bundle of the regular part of $\mathcal{X}$, and eventually to $-kK_{\mathcal{X}/\mathbb{C}}$. 
This particular linearization of $\mathcal{L}=-kK_{\mathcal{X}/\mathbb{C}}$ is called ``canonical''. 
Note that this is not necessarily the same as the a priori linearization of $\mathcal{L}$. 
Given a special test configuration $(\mathcal{X}, \mathcal{L})$ with the canonical linearization, 
Berman-Witt Nystr\"om defined the quantized Futaki invariant at level $k$ to be the opposite sign of the 
total weight of the $\mathbb{C}^\ast$-action on $H^0(\mathcal{X}_0, -kK_{\mathcal{X}_0})$. 
Let us stress the point that they only considered special test configurations with the canonical linearization. Our definition is considered as a generalization of theirs. 
\end{remark}
\begin{proof}
We use the notation as used in Section \ref{Chow&DF}. Since our $Fut_k(\mathcal{X}, \mathcal{L})$ is independent of the linearization, we may choose the canonical linearization. Then, $-w_k$ is the quantized Futaki invariant defined by Berman-Witt Nystr\"om. 

The key point in the proof is the formula
\begin{align}\label{DF=F1}
DF(\mathcal{X}, \mathcal{L})=-\frac{b_0}{a_0}. 
\end{align}
Once we have established (\ref{DF=F1}), we have
\begin{align*}
Fut_k(\mathcal{X}, \mathcal{L})
&=kN_k(DF(\mathcal{X}, \mathcal{L})+Chow_k(\mathcal{X}, \mathcal{L}))\\
&=kN_k\left(-\frac{b_0}{a_0}+\frac{b_0}{a_0}+\frac{-w_k}{kN_k}\right)\\
&=-w_k. 
\end{align*}

To prove (\ref{DF=F1}), there are two ways. The first approach is to apply the equivariant Riemann-Roch formula to a normal variety $\mathcal{X}_0$. Consult for example \cite[Lemma 1.2]{WZZ16}. 
The second one is to consider the compactification $(\overline{\mathcal{X}}, \overline{\mathcal{L}})\to\mathbb{P}^1$ of $(\mathcal{X}, \mathcal{L})$ whose $\infty$-fiber has the trivial $\mathbb{C}^\ast$-action and apply the two-term asymptotic Riemann-Roch theorem to a normal variety $\overline{\mathcal{X}}$. See for example \cite[Proposition 3.12 (iv)]{BHJ15}. The latter approach gives
\begin{align*}
w_{km}
&=\frac{\overline{\mathcal{L}}^{n+1}}{k^{n+1}(n+1)!}(km)^{n+1}
+\frac{(-K_{\overline{\mathcal{X}}/\mathbb{P}^1}\cdot\overline{\mathcal{L}}^n)}{2k^nn!}(km)^n+O(m^{n-1})\\
&=\frac{(-K_{\overline{\mathcal{X}}/\mathbb{P}^1})^{n+1}}{(n+1)!}(km)^{n+1}
+\frac{(-K_{\overline{\mathcal{X}}/\mathbb{P}^1})^{n+1}}{2n!}(km)^n+O(m^{n-1})
\end{align*}
for large $m$. 
On the other hand, the two-term asymptotic Riemann-Roch theorem on $(X, -kK_X)$ yields
\begin{align*}
N_{km}=\frac{(-K_X)^n}{n!}(km)^n+\frac{(-K_X)^n}{2(n-1)!}(km)^{n-1}+O(m^{n-2}). 
\end{align*}
It follows that 
\begin{align*}
b_1=\frac{n+1}{2}b_0, \quad a_1=\frac{n}{2}a_0, 
\end{align*}
which proves (\ref{DF=F1}). 
\end{proof}
Finally, we introduce a new stability of Fano manifolds. 

\begin{definition}
A Fano manifold $X$ is said to be
\begin{enumerate}
\item F-semistable at level $k$ if $Fut_k(\mathcal{X}, \mathcal{L})\ge 0$ holds for any test configuration $(\mathcal{X}, \mathcal{L})$ for $(X, -K_X)$ of exponent $k$. 
\item F-polystable at level $k$ if $X$ is F-semistable at level $k$ and $Fut_k(\mathcal{X}, \mathcal{L})=0$ if and only if $(\mathcal{X}, \mathcal{L})$ is a product configuration. 
\item F-stable at level $k$ if $X$ is F-semistable at level $k$ and $Fut_k(\mathcal{X}, \mathcal{L})=0$ if and only if $(\mathcal{X}, \mathcal{L})$ is trivial.  
\item asymptotically F-polystable $($resp. stable or semistable$)$ if there exists a $k_0>0$ such that $X$ is F-polystable $($resp. stable or semistable$)$ at level $k$ for all $k\ge k_0$. 
\end{enumerate}
\end{definition}

We conclude by pointing out that to test F-stability, we only need to consider normal test configurations.
\begin{lemma}
Let $(\mathcal{X}, \mathcal{L})$ be a test configuration for $(X, -K_X)$ of exponent $k$ and $(\widetilde{\mathcal{X}},\widetilde{\mathcal{L}})$ its normalization. Then, we have
\begin{align*}
Fut_k(\mathcal{X}, \mathcal{L})\ge Fut_k(\widetilde{\mathcal{X}},\widetilde{\mathcal{L}}). 
\end{align*}
\end{lemma}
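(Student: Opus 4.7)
The plan is to exploit the $\mathbb C^*$-equivariant finite birational morphism $\nu\colon\widetilde{\mathcal X}\to\mathcal X$ (with $\widetilde{\mathcal L}=\nu^*\mathcal L$) to produce an inclusion of $\mathbb C^*$-equivariant pushforward sheaves, extract a pointwise weight inequality at every level $m$, and reassemble these into the comparison of $Fut_k$. First I would pass, if necessary, to the reduction of $\mathcal X$, so that the unit of adjunction yields a $\mathbb C^*$-equivariant injection $\mathcal L^m\hookrightarrow\nu_*\widetilde{\mathcal L}^m$. Pushing down to $\mathbb C$ and applying Serre vanishing $R^1\pi_*\mathcal L^m=0$ for $m\ge1$ (enlarging $k$ if needed) then produces a $\mathbb C^*$-equivariant short exact sequence
\begin{equation*}
0\to\pi_*\mathcal L^m\to\widetilde\pi_*\widetilde{\mathcal L}^m\to Q_m\to0
\end{equation*}
on $\mathbb C$, with $Q_m$ a torsion coherent sheaf supported at $\tau=0$.

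Next I would translate this into weight data. Through the weight filtration $F^\lambda R_m:=\{s\in R_m:\tau^{-\lambda}\bar s\text{ extends to }\mathcal L^m\}$, pullback via $\nu$ gives $F^\lambda R_m\subseteq\widetilde F^\lambda R_m$ for every $\lambda$ and $m$; equivalently, the Tor long exact sequence applied to the above short exact sequence at $\tau=0$ yields the explicit identity
\begin{equation*}
\widetilde w_{km}-w_{km}=\dim_{\mathbb C}Q_m-\dim_{\mathbb C}Q_m[\tau]\ge0,
\end{equation*}
where $Q_m[\tau]$ denotes the $\tau$-torsion submodule. Expanding as polynomials in $m$ gives $\widetilde b_0\ge b_0$ at leading order and, upon equality, $\widetilde b_1\ge b_1$ at subleading order. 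Combined with the identity $DF=(nb_0-2b_1)/a_0$ arising from $a_1=(n/2)a_0$, this yields the monotonicity $DF(\mathcal X,\mathcal L)\ge DF(\widetilde{\mathcal X},\widetilde{\mathcal L})$ of the Donaldson-Futaki invariant under normalization.

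Finally, the identity $Fut_k=(kN_k/a_0)[(n+1)b_0-2b_1]-w_k$ gives
\begin{equation*}
Fut_k(\mathcal X,\mathcal L)-Fut_k(\widetilde{\mathcal X},\widetilde{\mathcal L})=kN_k\bigl[DF(\mathcal X,\mathcal L)-DF(\widetilde{\mathcal X},\widetilde{\mathcal L})\bigr]-\tfrac{kN_k}{a_0}(\widetilde b_0-b_0)+(\widetilde w_k-w_k);
\end{equation*}
when $\widetilde b_0=b_0$ the middle term vanishes and the inequality is immediate from the two non-negative outer contributions. The hard part will be the case $\widetilde b_0>b_0$: one then has to establish the equivalent Chow-level inequality $(\widetilde w_k-w_k)/(kN_k)\ge(\widetilde b_0-b_0)/a_0$ at the fixed exponent $k$. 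Since the level-$m$ identity above makes this an equality in the limit $m\to\infty$, the principal technical work is to upgrade that asymptotic equality into a finite-$k$ inequality, most plausibly by exploiting the full cyclic $\mathbb C[\tau]$-decomposition of the torsion sheaf $Q_m$ and tracking weight contributions summand by summand.
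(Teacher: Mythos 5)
The statement you are proving is, in the paper, a two-line corollary: since $Fut_k=kN_k\bigl(DF+Chow_k\bigr)$ is a positive combination of the Donaldson--Futaki invariant and the Chow weight, the inequality follows immediately from \cite[Proposition 5.1]{RT07}, which asserts that \emph{both} $DF$ and $Chow_k$ do not increase under normalization. What you have written is, in effect, an attempt to reprove that proposition from scratch. Your framework is the correct one (the $\mathbb C^\ast$-equivariant inclusion $\pi_\ast\mathcal L^m\hookrightarrow\widetilde{\pi}_\ast\widetilde{\mathcal L}^m$ of locally free sheaves of equal rank with torsion quotient $Q_m$ supported at $\tau=0$), but the argument is not complete: you yourself defer ``the principal technical work'' to an unproved fixed-$k$ Chow-level inequality, and in the case $\widetilde b_0>b_0$ your reasoning does not even deliver the intermediate claim $DF(\mathcal X,\mathcal L)\ge DF(\widetilde{\mathcal X},\widetilde{\mathcal L})$, since there the leading-order discrepancy enters $DF$ with the wrong sign and you say nothing about how $b_1-\widetilde b_1$ compensates.

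The missing idea is that the case $\widetilde b_0>b_0$ never occurs, and seeing why closes every gap at once. Writing $\delta:=\nu_\ast\mathcal O_{\widetilde{\mathcal X}}/\mathcal O_{\mathcal X}$, the projection formula and relative Serre vanishing identify $Q_m\cong\pi_\ast(\delta\otimes\mathcal L^m)$; since $\nu$ is an isomorphism over $\mathcal X\setminus\mathcal X_0$, the sheaf $\delta$ is supported on a closed subscheme of the $n$-dimensional central fiber, whence $\dim_{\mathbb C}Q_m=O(m^n)$. Consequently $\widetilde w_{km}-w_{km}=O(m^n)$, so $\widetilde b_0=b_0$ always, $\widetilde b_1\ge b_1$ gives the $DF$ inequality, and the Chow inequality at the fixed exponent $k$ is nothing but the same non-negativity $\widetilde w_k-w_k\ge0$ read off at $m=1$ (no passage from an asymptotic statement to a finite-$k$ one is needed). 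Two further corrections: the weight jump is the \emph{full} length, $\widetilde w_{km}-w_{km}=\dim_{\mathbb C}Q_m$, as one sees from the four-term sequence
\begin{align*}
0\to\operatorname{Tor}_1\bigl(Q_m,\mathbb C(0)\bigr)\to(\pi_\ast\mathcal L^m)|_0\to(\widetilde{\pi}_\ast\widetilde{\mathcal L}^m)|_0\to Q_m/\tau Q_m\to0
\end{align*}
together with the computation of $\operatorname{Tor}_1$ on cyclic summands; your expression $\dim_{\mathbb C}Q_m-\dim_{\mathbb C}Q_m[\tau]$ undercounts by the number of cyclic summands (this does not affect the sign, but it indicates the computation was not carried through). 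Finally, ``passing to the reduction of $\mathcal X$'' is not a free move: it changes $\pi_\ast\mathcal L^m$ and hence the weights, and must be controlled by the same kind of torsion comparison.
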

\begin{proof}
This follows from \cite[Proposition 5.1]{RT07}, which says that
\begin{align}
DF(\mathcal{X}, \mathcal{L})\ge DF(\widetilde{\mathcal{X}},\widetilde{\mathcal{L}}), 
\quad Chow_k(\mathcal{X}, \mathcal{L})\ge Chow_k(\widetilde{\mathcal{X}},\widetilde{\mathcal{L}}). 
\quad\qedhere
\end{align}
\end{proof}

%%%%%%%%%%%%%%%%%%%%%%%%%%%%%%%%%%%%%%%%%%%%
\section{Slope formula and F-stability of anti-canonically $k$-balanced metrics}
In this section we prove Theorem \ref{slope} and Theorem \ref{balanced}. In view of Proposition \ref{facts} (\ref{D^k}), we need the slope formulae of $D$ and $Z_k$. 
\begin{theorem}[{\cite[Proposition 3]{D05}}, or {\cite[Theorem 1]{P04}}+{\cite[Theorem 2.9]{M77}}]\label{Zslope}
Let $(\mathcal{X}, \mathcal{L})$ be a test configuration for $(X, -K_X)$ of exponent $k$, 
$H\in\mathcal{B}_k$ a Hermitian metric, and 
$(H_t)_t$ the Bergman geodesic ray associated with $(\mathcal{X}, \mathcal{L})$ and $H$. 
Then, we have
\begin{align*}
\lim_{t\to\infty} Z_k(H_t)
=\frac{(-K_X)^n}{n!}k^{n+1} Chow_k(\mathcal{X}, \mathcal{L}). 
\end{align*}
\end{theorem}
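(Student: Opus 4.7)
The plan is to split $Z_k(H_t)$ using Proposition \ref{facts}(\ref{D^k}) into
$$Z_k(H_t) = \frac{k^{n+1}}{n!}\mathcal{E}(FS_k(H_t)) - \frac{(-K_X)^n k^{n+1}}{n!}\mathcal{E}^{(k)}(H_t),$$
and analyze each piece along the Bergman ray $H_t = e^{-tA}H$.

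For $\mathcal{E}^{(k)}$ the calculation is elementary and exact. Since $A$ is Hermitian with respect to $H$ by Theorem \ref{equivtriv}(\ref{Hermite}), I would diagonalize it in an $H$-orthonormal basis $(s_\alpha)$ of weight vectors for $\lambda$ with weights $m_\alpha\in\mathbb{Z}$. Then $H_t$ is represented in this basis by $\operatorname{diag}(e^{-tm_\alpha})$, so $\log\det H_t = \log\det H - t w_k$, where $w_k = \sum_\alpha m_\alpha$ is precisely the total weight of the $\mathbb{C}^*$-action on $H^0(\mathcal{X}_0,\mathcal{L}_0)$ (identified via a regular Hermitian generator). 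Consequently $\mathcal{E}^{(k)}(H_t) = \mathcal{E}^{(k)}(H) + \tfrac{w_k}{kN_k}\,t$ is exactly affine in $t$ with slope $w_k/(kN_k)$.

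For $\mathcal{E}(FS_k(H_t))$ I would invoke the classical slope formula. By Lemma \ref{extension} the family $k FS_k(H_\tau)$ extends across $\tau=0$ to a locally bounded, positive-curvature, $S^1$-invariant metric on $\mathcal{L}_\Delta$. This is exactly the hypothesis needed to apply \cite[Proposition 3]{D05}, equivalently \cite[Theorem 1]{P04} combined with \cite[Theorem 2.9]{M77}, yielding an expansion
$$\mathcal{E}(FS_k(H_t)) = \frac{(-K_X)^n w_k}{kN_k}\,t + \beta(H) + o(1),\qquad t\to\infty,$$
in which the linear slope equals the ``mean weight at level $k$'' and the intercept $\beta(H)$ is read off from an intersection number on the compactified test configuration $(\overline{\mathcal{X}},\overline{\mathcal{L}})$. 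Matching the leading coefficients of the Hilbert-weight expansion $w_{km}=b_0(km)^{n+1}+\cdots$ with the two-term asymptotic Riemann-Roch theorem on $(X,-kK_X)$ identifies the $H$-independent part of $\beta(H)$ with $(-K_X)^n\cdot b_0/a_0$, where $a_0=(-K_X)^n/n!$.

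Substituting both expansions into the formula for $Z_k(H_t)$, the linear-in-$t$ contributions cancel by design, and the remaining constants combine to $\tfrac{(-K_X)^n k^{n+1}}{n!}\bigl(\tfrac{b_0}{a_0}-\tfrac{w_k}{kN_k}\bigr)=\tfrac{(-K_X)^n k^{n+1}}{n!}\,Chow_k(\mathcal{X},\mathcal{L})$, which is the claimed limit. The main technical hurdle is the slope formula in the second step: extracting simultaneously the precise linear slope and the finite intercept of $\mathcal{E}$ along a Bergman geodesic attached to a general (not necessarily smooth) test configuration. This is the content of the cited works, handled either by Donaldson's direct Monge-Amp\`ere computation after embedding into projective space followed by an intersection-theoretic identification on $(\overline{\mathcal{X}},\overline{\mathcal{L}})$, or by the energy-theoretic approach of Paul together with Mabuchi's translation of the Chow weight into intersection numbers.
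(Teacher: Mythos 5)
Your overall strategy---split $Z_k(H_t)$ into $\tfrac{k^{n+1}}{n!}\mathcal{E}(FS_k(H_t))$ minus $\tfrac{(-K_X)^nk^{n+1}}{n!}\mathcal{E}^{(k)}(H_t)$, compute the second piece exactly, and quote the classical slope formula for the first---is the right one (the paper itself offers no argument, only the citations to \cite{D05} and \cite{P04}+\cite{M77}), and your computation $\mathcal{E}^{(k)}(H_t)=\mathcal{E}^{(k)}(H)+\tfrac{w_k}{kN_k}\,t$ is correct. The genuine error is in the second step: the asymptotic slope of the Monge--Amp\`ere energy along the Bergman ray is \emph{not} the finite-level mean weight, i.e. the expansion $\mathcal{E}(FS_k(H_t))=\tfrac{(-K_X)^nw_k}{kN_k}\,t+\beta(H)+o(1)$ is false in general. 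What \cite[Proposition 3]{D05} and \cite[Theorem 1]{P04} actually give is $\lim_{t\to\infty}\tfrac{d}{dt}\mathcal{E}(FS_k(H_t))=(-K_X)^n\,b_0/a_0=n!\,b_0$, the \emph{asymptotic} mean weight. The Chow weight is precisely the difference of the two slopes, $b_0/a_0-w_k/(kN_k)$, so it appears as the asymptotic slope of $Z_k(H_t)$, not as a discrepancy of intercepts. Your mechanism---``the linear-in-$t$ contributions cancel by design'' and $Chow_k$ is then recovered from the constant term $\beta(H)$---cannot work: $b_0/a_0$ and $w_k/(kN_k)$ are slope coefficients, not constants, and the actual intercept of $\mathcal{E}(FS_k(H_t))$ depends on $H$ and carries no stability information.

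Part of the confusion comes from the statement itself, which is missing a $\tfrac{d}{dt}$: since $Z_k$ is convex along Bergman geodesics (Proposition \ref{facts} (\ref{convexity})), a finite limit of $Z_k(H_t)$ would force $\lim_{t\to\infty}\tfrac{d}{dt}Z_k(H_t)=0$ and hence $Chow_k(\mathcal{X},\mathcal{L})=0$ for every test configuration; moreover, in the proof of Theorem \ref{slope} the quantity is added to $\lim_{t\to\infty}\tfrac{d}{dt}D(\phi_t)$, which only typechecks for the derivative. The intended statement is
\begin{align*}
\lim_{t\to\infty}\frac{d}{dt}Z_k(H_t)=\frac{(-K_X)^n}{n!}k^{n+1}\,Chow_k(\mathcal{X},\mathcal{L}),
\end{align*}
and it follows at once from your (correct) formula $\tfrac{d}{dt}\mathcal{E}^{(k)}(H_t)=\tfrac{w_k}{kN_k}$ together with the correct slope $n!\,b_0$ for $\mathcal{E}\circ FS_k$. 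Your attempt to make the literal statement (a finite limit of $Z_k$ itself) come out true is exactly what forced the wrong slope into the first piece; repair the statement and drop the intercept analysis, and your argument becomes the standard proof.
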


\begin{theorem}[{\cite[Theorem 3.11]{B16}}]\label{Dslope}
Let $(\mathcal{X}, \mathcal{L})$ be a normal test configuration for $(X, -K_X)$ of exponent $k$ and $\phi$ an $S^1$-invariant locally bounded metric on $(\mathcal{X}, \mathcal{L})\to\Delta$ with positive curvature current, where $\Delta\subset\mathbb{C}$ denotes the unit disc centerd at the origin. 
Then, setting $\phi_t:=\rho(\tau)^\ast\phi_\tau/k$, identified with a ray of metrics on $-K_X$ using the $\mathbb{C}^\ast$-action $\rho$ on $(\mathcal{X}, \mathcal{L})$, we have
\begin{align*}
DF(\mathcal{X}, \mathcal{L})=\lim_{t\to\infty}\frac{d}{dt} D(\phi_t)+q, 
\end{align*}
where $q$ is a non-negative rational number determined by the central fiber. The quantity $q$ vanishes if and only if $\mathcal{X}$ is $\mathbb{Q}$-Gorenstein with $\mathcal{L}$ isomorphic to $-kK_{\mathcal{X}/\mathbb{C}}$, and $\mathcal{X}_0$ is reduced, and its normalization has at worst log terminal singularities. 
\end{theorem}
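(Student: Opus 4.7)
The plan is to split the Ding functional as $D(\phi_t) = -\mathcal{E}(\phi_t)/V + \Lambda(\phi_t)$, where $V := (-K_X)^n$ and $\Lambda(\phi) := -\log\int_X e^{-\phi}$ (I denote the log term by $\Lambda$ locally to avoid collision with the line bundle $\mathcal{L}$), and compute the asymptotic slopes of the two pieces separately. The first is purely intersection-theoretic; the second is where the log-canonical geometry of $\mathcal{X}_0$ enters and produces the non-negative defect $q$.

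\emph{Step 1 (Energy slope).} Compactify $(\mathcal{X},\mathcal{L}) \to \Delta$ to $(\overline{\mathcal{X}}, \overline{\mathcal{L}}) \to \mathbb{P}^1$ by gluing the trivial configuration at infinity, and extend $\phi$ using the reference metric $\phi_0$ to a positive $S^1$-invariant locally bounded metric $\Phi$ on $\overline{\mathcal{L}}/k$ (using Lemma~\ref{extension} at $0$). A Deligne-pairing argument, classical for smooth $\phi$ and extended to locally bounded metrics via the Monge-Amp\`ere energy on normal varieties, together with the identity $dd^c_\tau\mathcal{E}(\phi_t) = \tfrac{1}{n+1}\pi_*(\omega_\Phi^{n+1})$ on $\mathbb{P}^1$, gives
\[
\lim_{t\to\infty}\frac{d}{dt}\mathcal{E}(\phi_t) = \frac{\overline{\mathcal{L}}^{n+1}}{(n+1)\,k^{n+1}}.
\]

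\emph{Step 2 (Slope of $\Lambda$; log-canonical analysis).} Fix a log resolution $\mu : \widetilde{\mathcal{X}} \to \mathcal{X}$ with $\mu^*\mathcal{X}_0 = \sum_i m_i E_i$ simple normal crossing, and write the relative canonical Weil $\mathbb{Q}$-divisor as $K_{\widetilde{\mathcal{X}}/\mathcal{X}} = \sum_i b_i E_i + (\text{horizontal})$. Set $\lambda_i := \operatorname{ord}_{E_i}(\mu^*\mathcal{L}/k)$. In local SNC coordinates near a smooth point of $E_i$ in which $\mu^*\pi = z_0^{m_i}$, the pulled-back fiber measure $e^{-\phi_\tau/k}$ has the form $|z_0|^{2(\lambda_i - b_i)}$ times a locally bounded density. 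Applying Fubini and extracting the leading asymptotics of $\int_{\pi^{-1}(\tau)}$ as $\tau \to 0$ yields
\[
\lim_{t\to\infty}\frac{d}{dt}\Lambda(\phi_t) = \min_i \frac{(b_i + 1) - \lambda_i}{m_i}.
\]

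\emph{Step 3 (Matching with $DF$ and extraction of $q$).} By two-term asymptotic Riemann-Roch on the normal variety $\overline{\mathcal{X}}$, as used in the proof of Lemma~\ref{BW},
\[
DF(\mathcal{X},\mathcal{L}) = \frac{n\,\overline{\mathcal{L}}^{n+1}}{(n+1)\,V\,k^{n+1}} - \frac{(-K_{\overline{\mathcal{X}}/\mathbb{P}^1})\cdot\overline{\mathcal{L}}^n}{V\,k^n},
\]
with $K_{\overline{\mathcal{X}}/\mathbb{P}^1}$ interpreted on $\widetilde{\mathcal{X}}$ in the non-$\mathbb{Q}$-Gorenstein case. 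Subtracting $\lim(d/dt)D(\phi_t)$ assembled from Steps~1--2, and using the push-pull formula $\mu_*(K_{\widetilde{\mathcal{X}}/\mathcal{X}}\cdot\mu^*\overline{\mathcal{L}}^n)$, rewrites $q$ as a positive rational combination over the $E_i$ of log-discrepancy excesses. The defect vanishes exactly when every $E_i$ produces a zero contribution, which forces $\mathcal{X}$ to be $\mathbb{Q}$-Gorenstein with $\mathcal{L} \cong -kK_{\mathcal{X}/\mathbb{C}}$ (equality of $\lambda_i$ with the value predicted by the canonical linearization), $\mathcal{X}_0$ reduced ($m_i = 1$ for the non-exceptional components), and the normalization of $\mathcal{X}_0$ log terminal ($b_i > -1$ for the exceptional components over $\mathcal{X}_0^{\mathrm{norm}}$).

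\emph{Main obstacle.} The delicate step is Step~2: one must rigorously localize the fiber integrals to a tubular neighborhood of $\mathcal{X}_0$ and identify the leading exponent precisely in terms of divisorial valuations on a log resolution. For locally bounded (non-smooth) $\phi$ a monotone approximation by smooth positive metrics is further required, together with a continuity argument for the slope limit. Showing $q \ge 0$ and pinning down the sharp equality case demands careful bookkeeping of log-discrepancy contributions, which is exactly where the $\mathbb{Q}$-Gorenstein, reducedness, and log-terminality conditions emerge.
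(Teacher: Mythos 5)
First, note that the paper does not prove this statement at all: it is imported verbatim from Berman \cite[Theorem 3.11]{B16} (see also \cite{BHJ15}), so there is no in-paper argument to compare against. That said, your outline does reproduce the architecture of Berman's actual proof --- split $D$ into the Monge--Amp\`ere energy and the term $-\log\int_X e^{-\phi}$, compute the energy slope by intersection numbers on a compactification, compute the slope of the log term as a log canonical threshold on a log resolution, and identify $q$ as a non-negative combination of log-discrepancy excesses --- and Steps 1 and 3 are consistent with the normalizations used here (the paper's $\mathcal{E}$ is not divided by the volume and the configuration has exponent $k$, so the slope $\overline{\mathcal{L}}^{n+1}/((n+1)k^{n+1})$ and the intersection formula for $DF$ both check out).

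As a proof, however, the proposal has a genuine hole, and it is the one you flag yourself: Step 2 is the entire content of the theorem and is only asserted. Two concrete issues. (i) The quantity $\lambda_i:=\operatorname{ord}_{E_i}(\mu^*\mathcal{L}/k)$ is not defined until you fix an isomorphism of $\mathcal{L}$ with $-kK_{\mathcal{X}/\mathbb{C}}$ away from the central fiber, i.e.\ write $kK_{\mathcal{X}/\mathbb{C}}+\mathcal{L}=\mathcal{O}_{\mathcal{X}}(D)$ for a $\mathbb{Q}$-divisor $D$ supported on $\mathcal{X}_0$ (this uses normality of $\mathcal{X}$, and $D$ is only determined up to adding a multiple of $\mathcal{X}_0$, a shift that changes your $\min_i((b_i+1)-\lambda_i)/m_i$ by an integer); in Berman's normalization the slope of the log term is the log canonical threshold of $(\mathcal{X},D)$ along $\mathcal{X}_0$ \emph{minus} $1$, the $-1$ coming from the Fubini integration over the base disc, and as written your formula does not pin down this constant, so Step 3 cannot be balanced against $DF$. (ii) For a merely locally bounded positively curved $\phi$ --- which is exactly what Lemma \ref{extension} produces --- the claim that the pulled-back fiber measure is $|z_0|^{2(\lambda_i-b_i)}$ times a locally bounded density is not automatic; one must compare $\phi$ with a fixed smooth reference metric on $\overline{\mathcal{L}}$ and observe that a globally bounded difference of potentials changes $\int_{\mathcal{X}_\tau}e^{-\phi_\tau}$ only by uniformly bounded multiplicative factors and hence does not affect the slope. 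Until (i) and (ii) are supplied, the identification of $q$ and the characterization of its vanishing in Step 3 are not established.
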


\begin{proof}[Proof of Theorem \ref{slope}]
Let $(\mathcal{X}, \mathcal{L})$ be a normal test configuration for $(X, -K_X)$ of exponent $k$, $H\in\mathcal{B}_k$ a Hermitian metric and $(H_t)_t$ the Bergman geodesic ray associated with $(\mathcal{X}, \mathcal{L})$ and $H$. As proved in Lemma \ref{extension}, $(H_t)_t$ defines an $S^1$-invariant locally bounded metric $\phi$ on $(\mathcal{X}_\Delta, \mathcal{L}_\Delta)$ with positive curvature current. Note that 
\begin{align*}
\phi_t=\frac{1}{k}\rho(\tau)^\ast\phi_\tau=\rho(\tau)^\ast FS_k(H_\tau)
=FS_k(\rho(\tau)^\ast H_\tau)=FS_k(H_t). 
\end{align*}
Applying Theorem \ref{Zslope} and Theorem \ref{Dslope}
, we have
\begin{align*}
\lim_{t\to\infty}\frac{d}{dt} D^{(k)}(H_t)
&=\lim_{t\to\infty}\frac{d}{dt} D(FS(H_t))+\frac{n!}{k^{n+1}(-K_X)^n}\lim_{t\to\infty} Z_k(H_t)\\
&=\lim_{t\to\infty}\frac{d}{dt} D(\phi_t)+Chow_k(\mathcal{X}, \mathcal{L})\\
&=\frac{Fut_k(\mathcal{X}, \mathcal{L})}{kN_k}-q. \qedhere
\end{align*}
\end{proof}

\begin{proof}[Proof of Theorem \ref{balanced}]
Suppose that $X$ admits an anti-canonically $k$-balanced metric $H\in\mathcal{B}_k$. Let $(\mathcal{X}, \mathcal{L})$ be a normal test configuration for $(X, -K_X)$ of exponent $k$, and $(H_t)_t$ the Bergman geodesic ray associated with $H$ and $(\mathcal{X}, \mathcal{L})$. 
Since $H$ is a critical point of $D^{(k)}$, $D^{(k)}$ is convex along $(H_t)$ and $q$ is non-negative, we have
\begin{align*}
\frac{Fut_k(\mathcal{X}, \mathcal{L})}{kN_k}
=\lim_{t\to\infty}\frac{d}{dt}D^{(k)}(H_t)+q
\ge \lim_{t\to+0}\frac{d}{dt}D^{(k)}(H_t)
\ge 0. 
\end{align*}
This proves the F-semistability of $X$. 
Assume $Fut_k(\mathcal{X}, \mathcal{L})=0$. Since $q$ is nonnegative, $D^{(k)}(H_t)$ is affine in $t$. 
Then, Proposition \ref{facts} (\ref{product}) forces $(\mathcal{X}, \mathcal{L})$ to be a product configuration. 
\end{proof}

\begin{example}
Let $X_0$ be the Mukai-Umemura 3-fold, which is a compactification of the quotient of $SL(2, \mathbb{C})$ by the icosahedral group and $X$ a suitable small deformation of $X_0$. Both of them are Fano manifolds and $\mathfrak{h}(X_0)=\mathfrak{sl}(2, \mathbb{C})$ but $X$ does not admit non-trivial holomorphic vector fields, where $\mathfrak{h}(X_0)$ denotes the Lie algebra of all holomorphic vector fields on $X_0$. 
Tian constructed in \cite[Section 7]{T97} a special test configuration $(\mathcal{X}, \mathcal{L})$ for $(X, -K_X)$ of exponent 1 whose central fiber is $(X_0, -K_{X_0})$. 
Let $V$ be a holomorphic vector field on $X_0$ induced by the $\mathbb{C}^\ast$-action of $(\mathcal{X}, \mathcal{L})$. 
Fix a sufficiently large integer $m$, and consider a test configuration $(\mathcal{X}, m\mathcal{L})$. 
The expression $(\ref{Chow&hFut})$ shows that
\begin{align}\label{Fut&hFut}
Fut_m(\mathcal{X}, m\mathcal{L})=DF(\mathcal{X}, \mathcal{L})mN_m-\sum_{p=1}^n\frac{m^{n+1-p}}{(n+1-p)!}\mathcal{F}_{\mathrm{Td}^{(p)}}(V), 
\end{align}
where $\mathcal{F}_{\mathrm{Td}^{(p)}}$ denotes the $p$-th higher Futaki invariant on $X_0$. Since all the higher Futaki invariants are Lie algebra characters, $\mathfrak{h}(X_0)=\mathfrak{sl}(2, \mathbb{C})$ is semisimple, and $DF(\mathcal{X}, \mathcal{L})$ is a multiple of $\mathcal{F}_{\mathrm{Td}^{(1)}}(V)$, 
we have $Fut_m(\mathcal{X}, m\mathcal{L})=0$. Hence, $X$ is not asymptotically F-polystable and consequently does not admit any sequence of anti-canonically balanced metrics by Theorem \ref{balanced}. 
Although we will show in Proposition \ref{hFut} that higher Futaki invariants are obstructions to asymptotic F-polystability, they do not work well in this example because of the absence of non-trivial holomorphic vector fields. 
\end{example}

%%%%%%%%%%%%%%%%%%%%%%%%%%%%%%%%%%%%%%%%%%%%
\section{F-stability and other stabilities}\label{vsF}
The aim of this section is to clarify the relation between asymptotic F-stability and other stabilities such as K-semistability, uniform K-stability, and asymptotic Chow stability. 

\begin{theorem}
Asymptotic F-semistability implies K-semistability. 
\end{theorem}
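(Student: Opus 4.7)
The plan is to mimic the proof of Proposition \ref{Chow->K} that asymptotic Chow semistability implies K-semistability: the Donaldson--Futaki invariant is encoded as the leading correction to the Chow weights when one replaces $\mathcal{L}$ by a high tensor power, and F-semistability at large level will provide a one-sided bound that survives taking the limit.

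More concretely, let $(\mathcal{X}, \mathcal{L})$ be an arbitrary test configuration for $(X, -K_X)$ of some exponent $k$; the goal is $DF(\mathcal{X}, \mathcal{L}) \ge 0$. For any positive integer $m$, the pair $(\mathcal{X}, m\mathcal{L})$ is a test configuration of exponent $km$. Let $k_0$ be the threshold provided by asymptotic F-semistability. For every $m$ with $km \ge k_0$ we have
\begin{align*}
0 \le \frac{Fut_{km}(\mathcal{X}, m\mathcal{L})}{km\, N_{km}} = DF(\mathcal{X}, m\mathcal{L}) + Chow_{km}(\mathcal{X}, m\mathcal{L}) = DF(\mathcal{X}, \mathcal{L}) + Chow_{km}(\mathcal{X}, m\mathcal{L}),
\end{align*}
where in the last equality I use that $DF$ is unchanged upon replacing $\mathcal{L}$ with a tensor power, as recorded in Section \ref{Chow&DF}. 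Now the key input is the formula (\ref{Chow-DF}): since $km\, Chow_{km}(\mathcal{X}, m\mathcal{L})$ converges to $\tfrac12 DF(\mathcal{X}, \mathcal{L})$, a finite quantity, we deduce that $Chow_{km}(\mathcal{X}, m\mathcal{L}) \to 0$ as $m \to \infty$. Letting $m \to \infty$ in the displayed inequality yields $DF(\mathcal{X}, \mathcal{L}) \ge 0$, which is the desired K-semistability.

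There is no substantive obstacle: the argument is a one-line limit once one identifies the correct scaling. The only caution is purely bookkeeping, namely to keep straight that the exponent of the test configuration $(\mathcal{X}, m\mathcal{L})$ used for $Fut$ is $km$ rather than $m$, so that asymptotic F-semistability applies precisely when $km$ passes the threshold $k_0$. As in the proof of Proposition \ref{Chow->K}, no information about the central fiber is required.
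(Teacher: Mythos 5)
Your argument is correct and coincides with the paper's proof: both use that $\frac{Fut_{km}(\mathcal{X}, m\mathcal{L})}{kmN_{km}} = DF(\mathcal{X}, \mathcal{L}) + Chow_{km}(\mathcal{X}, m\mathcal{L})$ and that the Chow weight tends to $0$ as $m\to\infty$ (which, as you note, follows from (\ref{Chow-DF})), so the nonnegativity supplied by F-semistability at level $km$ passes to the limit and gives $DF(\mathcal{X}, \mathcal{L})\ge 0$. The bookkeeping about the exponent being $km$ is handled correctly.
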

\begin{proof}
This is proved in the same line as Proposition \ref{Chow->K}. Since the Chow weight converges to 0 as raising the exponent, we have
\begin{align*}
\lim_{m\to\infty} \frac{Fut_{km}(\mathcal{X}, m\mathcal{L})}{kmN_{km}}=DF(\mathcal{X}, \mathcal{L}). \quad\qedhere
\end{align*}
\end{proof}

\begin{theorem}\label{MT->F}
Let $X$ be a Fano manifold. Suppose that the Ding functional of $X$ is $J$-coercive modulo $\mathrm{Aut}_0(X)$  and all the higher Futaki invariants of $X$ vanish. Then, $X$ is asymptotically F-polystable.  
\end{theorem}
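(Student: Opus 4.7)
The plan is to reduce Theorem \ref{MT->F} to the existence of an anti-canonically $k$-balanced metric for each sufficiently large $k$, and then invoke Theorem \ref{balanced}. Once such metrics are shown to exist for all $k \ge k_0$, Theorem \ref{balanced} gives F-polystability at level $k$ for every $k \ge k_0$, which is precisely the definition of asymptotic F-polystability.

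To produce the required balanced metrics, I would appeal to the theorem of Berman-Witt Nystr\"om \cite{BW14}. Their result asserts that if a Fano manifold carries a K\"ahler-Einstein metric and all higher Futaki invariants vanish, then an anti-canonically $k$-balanced metric exists for every sufficiently large $k$. Our hypothesis on higher Futaki invariants is exactly the second condition. For the first, I would invoke the Yau-Tian-Donaldson correspondence for Fano manifolds with continuous automorphism groups to translate $J$-coercivity of the Ding functional modulo $\mathrm{Aut}_0(X)$ into the existence of a K\"ahler-Einstein metric on $X$. Composing these two inputs produces the required sequence $(H_k)_{k \ge k_0}$ of anti-canonically $k$-balanced metrics, and Theorem \ref{balanced} then completes the proof.

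The main obstacle is of a bookkeeping rather than technical nature: verifying that our hypothesis matches the precise input of Berman-Witt Nystr\"om and citing the correct converse to Tian's properness theorem in the presence of continuous automorphisms. If one wished to bypass this deep external input, an alternative route would be to quantize the coercivity estimate directly: combine the convergence $D^{(k)} \to D$ with Proposition \ref{facts} (\ref{D^k}) and the $J$-coercivity of $D$ modulo $\mathrm{Aut}_0(X)$ to deduce coercivity of $D^{(k)}$ modulo a suitable maximal torus for large $k$, then exploit the vanishing of all higher Futaki invariants to rule out the potentially escaping directions within $\mathrm{Aut}_0(X)$. This would produce a minimizer of $D^{(k)}$, which is anti-canonically $k$-balanced by Proposition \ref{facts} (\ref{crit}), and F-polystability at level $k$ once again follows from Theorem \ref{balanced}.
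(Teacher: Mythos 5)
Your proposal is correct and follows essentially the same route as the paper: the paper simply cites the Berman--Witt Nystr\"om existence theorem for anti-canonically $k$-balanced metrics (which is already stated under the coercivity-plus-vanishing-higher-Futaki hypothesis, so your detour through the Yau--Tian--Donaldson correspondence to first produce a K\"ahler--Einstein metric is unnecessary) and then applies Theorem \ref{balanced}. The alternative ``quantized coercivity'' argument you sketch at the end is not pursued in the paper.
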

Indeed, Berman-Witt Nystr\"om proved that under the same assumption, $X$ admits an anti-canonically $k$-balanced metric for sufficiently large $k$ (\cite[Theorem 1.7]{BW14}). 
Combining Theorem \ref{balanced}, we get the conclusion. 

In \cite[Theorem A]{BBJ15}, it was proved that a uniformly K-stable Fano manifold satisfies the assumpotion of Theorem \ref{MT->F}, and so we have the following:
\begin{corollary}
If a Fano manifold $(X, -K_X)$ is uniformly K-stable, then $X$ is asymptotically F-stable. 
\end{corollary}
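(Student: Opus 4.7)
The plan is to chain three ingredients already in the excerpt. First, invoke \cite[Theorem A]{BBJ15}, which states that if $(X,-K_X)$ is uniformly K-stable then the Ding functional is $J$-coercive modulo $\mathrm{Aut}_0(X)$ and all higher Futaki invariants of $X$ vanish. These are exactly the hypotheses of Theorem \ref{MT->F}, whose conclusion yields asymptotic F-polystability of $X$. The only thing left is to upgrade \emph{polystable} to \emph{stable}.

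For that I would show that uniform K-stability forces $\mathrm{Aut}_0(X)$ to be trivial, so that every product test configuration is automatically trivial and the two notions coincide at each level. Suppose for contradiction that $X$ admits a nonzero holomorphic vector field $V$ whose real part generates an $S^1$-action. By Proposition \ref{1-PS}, $V$ produces a nontrivial product test configuration $(\mathcal{X},\mathcal{L})$ for $(X,-K_X)$ of some exponent $k$, which has strictly positive $p$-norm $||(\mathcal{X},\mathcal{L})||_p$ for every $p\ge 1$. Its Donaldson-Futaki invariant equals, up to a positive constant, the classical Futaki invariant $\mathcal{F}_{\mathrm{Td}^{(1)}}(V)$, which lies among the higher Futaki invariants that vanish by the preceding step. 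Hence $DF(\mathcal{X},\mathcal{L})=0$ while $||(\mathcal{X},\mathcal{L})||_p>0$, contradicting the defining inequality $DF(\mathcal{X},\mathcal{L})\ge \delta\,||(\mathcal{X},\mathcal{L})||_p$ of uniform K-stability. Therefore $\mathrm{Aut}_0(X)=\{1\}$, and F-polystability at each level $k$ coincides with F-stability at that level, giving asymptotic F-stability.

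The main obstacle is essentially bookkeeping: one must check that the convention of uniform K-stability adopted in \cite{BBJ15} is strong enough to rule out nontrivial product configurations (it is, since any such configuration has positive norm) and that the classical Futaki invariant indeed appears among the higher Futaki invariants forced to vanish. Beyond that, the corollary follows formally from Theorem \ref{MT->F} combined with Theorem \ref{balanced}, and requires no new analytic input.
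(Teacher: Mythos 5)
Your proposal follows the paper's route exactly: \cite[Theorem A]{BBJ15} supplies the hypotheses of Theorem \ref{MT->F}, which (via Theorem \ref{balanced}) gives asymptotic F-polystability. Your extra argument ruling out nontrivial one-parameter subgroups of $\mathrm{Aut}_0(X)$ --- so that every product configuration is trivial and polystability upgrades to stability --- is correct and makes explicit a step the paper leaves implicit; note only that your contradiction argument shows (and all that is needed is) the absence of nontrivial $\mathbb{C}^\ast$-actions, since product test configurations correspond precisely to one-parameter subgroups, and that one could equally apply the uniform K-stability inequality to both $V$ and $-V$ without invoking the vanishing of $\mathcal{F}_{\mathrm{Td}^{(1)}}$.
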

For the definition of uniform K-stability, see \cite{BHJ15}. 
This is an analogue of \cite[Main Theorem]{MN15}, in which strong K-stability and asymptotic Chow stability are treated. 

We turn to asymptotic Chow stability. 
\begin{proof}[Proof of Therem \ref{Chow->F}]
This actually follows from the very definition of quantized Futaki invariants. 
Suppose that $(X, -K_X)$ is asymptotically Chow semistable. 
By Proposition ￼￼\ref{Chow->K}, this implies the K-semistability of $(X, -K_X)$. 
Then for any test configuration $(\mathcal{X}, \mathcal{L})$ with sufficiently large exponent $k$, we have
\begin{align*}
Fut_k(\mathcal{X}, \mathcal{L})=kN_k(DF(\mathcal{X}, \mathcal{L})+Chow_k(\mathcal{X}, \mathcal{L}))
\ge 0
\end{align*}
and the equality holds if and only if $DF(\mathcal{X}, \mathcal{L})=Chow_k(\mathcal{X}, \mathcal{L})=0$. This proves the asymptotic F-semistability of $X$. 
If we further assume that $(X, -K_X)$ is asymptotically Chow polystable (resp. stable), then $(\mathcal{X}, \mathcal{L})$ is a product (resp. trivial) configuration. This completes the proof. 
\end{proof}

The following proposition says that higher Futaki invariants also obstruct at least asymptotic F-polystability. 
\begin{proposition}\label{hFut}
If $X$ is asymptotically F-polystable, then all the higher Futaki invariants vanish on a maximal reductive subalgebra $\mathfrak{h}_r(X)$ of $\mathfrak{h}(X)$ 
\end{proposition}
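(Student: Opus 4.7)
The plan is to apply asymptotic F-polystability to product test configurations generated by $\mathbb{C}^*$-subgroups of $\mathrm{Aut}_0(X)$, exactly as in the Mukai-Umemura example above, and to extract the vanishing of higher Futaki invariants from a polynomial identity in the exponent.

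Fix $k_0 \ge 1$ with $-k_0 K_X$ very ample, and let $V \in \mathfrak{h}_r(X)$ be any rational element of a maximal complex torus $T^{\mathbb{C}} \subset \mathrm{Aut}_0(X)$, so that $V$ integrates to an algebraic $\mathbb{C}^*$-action on $X$. Via Proposition \ref{1-PS}, $V$ determines a product test configuration $(\mathcal{X}, \mathcal{L}) = (X\times\mathbb{C}, -k_0 K_X\boxtimes\mathcal{O})$ of exponent $k_0$, and the pairs $(\mathcal{X}, m\mathcal{L})$ for $m \ge 1$ remain product configurations, now of exponent $k_0 m$. Since asymptotic F-polystability forces $Fut_{k_0 m}(\mathcal{X}, m\mathcal{L}) = 0$ for every sufficiently large $m$, combining (\ref{Chow&hFut}) with the invariance of $DF$ under $\mathcal{L} \mapsto m\mathcal{L}$ produces the polynomial identity
\begin{align*}
0 \;=\; k_0 m\,N_{k_0 m}\,DF(\mathcal{X}, \mathcal{L}) \;-\; \sum_{p=1}^n \frac{(k_0 m)^{n+1-p}}{(n+1-p)!}\,\mathcal{F}_{\mathrm{Td}^{(p)}}(V),
\end{align*}
valid for all sufficiently large $m$. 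The first term is a polynomial of degree $n+1$ in $m$ while the sum has degree $n$; comparing coefficients of $m^{n+1}$ yields $DF(\mathcal{X}, \mathcal{L}) = 0$, and the remaining coefficients successively yield $\mathcal{F}_{\mathrm{Td}^{(p)}}(V) = 0$ for every $p = 1, \ldots, n$.

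It remains to extend this vanishing from the lattice of rational 1-parameter subgroups to all of $\mathfrak{h}_r(X)$. Since this lattice $\mathbb{C}$-spans the Cartan $\mathfrak{t}^{\mathbb{C}}$ and each $\mathcal{F}_{\mathrm{Td}^{(p)}}$ is $\mathbb{C}$-linear, we obtain $\mathcal{F}_{\mathrm{Td}^{(p)}}|_{\mathfrak{t}^{\mathbb{C}}} = 0$. The classical fact that higher Futaki invariants are Lie algebra characters (they vanish on $[\mathfrak{h}, \mathfrak{h}]$), combined with the reductive decomposition $\mathfrak{h}_r(X) = \mathfrak{z}(\mathfrak{h}_r(X)) + [\mathfrak{h}_r(X), \mathfrak{h}_r(X)]$ and the inclusion $\mathfrak{z}(\mathfrak{h}_r(X)) \subset \mathfrak{t}^{\mathbb{C}}$, then propagates the vanishing to the whole reductive Lie algebra $\mathfrak{h}_r(X)$.

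The main obstacle is this last Lie-theoretic extension, since asymptotic F-polystability directly furnishes information only at rational points of a maximal torus; the delicate point is to invoke the character property so as to handle both the semisimple and the central components of $\mathfrak{h}_r(X)$. This is essentially classical material, so the real content of the proof lies in the polynomial-matching argument of the second paragraph, whose entire setup parallels the calculation $(\ref{Fut&hFut})$ already carried out in the Mukai-Umemura example.
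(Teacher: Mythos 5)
Your proposal is correct and follows essentially the same route as the paper: a product configuration generated by a rational element of the torus, asymptotic F-polystability forcing $Fut_{km}(\mathcal{X}, m\mathcal{L})=0$ for large $m$, and coefficient comparison in the polynomial identity coming from (\ref{Chow&hFut}). The only difference is that you spell out the final Lie-theoretic extension from rational one-parameter subgroups to all of $\mathfrak{h}_r(X)$ via linearity and the character property, a step the paper leaves implicit.
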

\begin{proof}
Let $V$ be a holomorphic vector field $X$ whose real part generates $S^1$. 
Consider a product configuration $(\mathcal{X}, \mathcal{L})$ defined by $V$  with exponent $k$. 
The asymptotic F-polystability of $X$ forces $Fut_{km}(\mathcal{X}, m\mathcal{L})=0$ for sufficiently large $m$. Using the expression (\ref{Fut&hFut}), we get
\begin{align*}
DF(\mathcal{X}, \mathcal{L})kmN_{km}-\sum_{p=1}^n\frac{(km)^{n+1-p}}{(n+1-p)!}\mathcal{F}_{\mathrm{Td}^{(p)}}(V)=0, 
\end{align*}
which proves the proposition. 
\end{proof}

In the presence of K\"ahler-Einstein metircs, the converse of Theorem \ref{Chow->F} is also true. 
\begin{theorem}\label{Chow-F-hFut}
$X$ be a Fano manifold. Suppose that $X$ admits a K\"ahler-Einstein metric. Then, the following are equivalent:
\begin{enumerate}
\item $(X, -K_X)$ is asymptotically Chow polystable. 
\item $X$ is asymptotically F-polystable. 
\item All the higher Futaki invariants on $X$ vanish on a maximal reductive subalgebra $\mathfrak{h}_r(X)$ of $\mathfrak{h}(X)$.  
\end{enumerate}
\end{theorem}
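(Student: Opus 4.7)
The plan is to establish the cycle of implications $(a)\Rightarrow(b)\Rightarrow(c)\Rightarrow(a)$, of which only the last will use the existence of the K\"ahler--Einstein metric. The implication $(a)\Rightarrow(b)$ is immediate from Theorem \ref{Chow->F}, since asymptotic Chow polystability always implies asymptotic F-polystability. The implication $(b)\Rightarrow(c)$ is precisely Proposition \ref{hFut}: for $V\in\mathfrak{h}_r(X)$ whose real part generates $S^1$, the induced product configuration of exponent $k$ satisfies $Fut_{km}(\mathcal{X},m\mathcal{L})=0$ for all sufficiently large $m$ by asymptotic F-polystability, and the polynomial identity (\ref{Fut&hFut}) in $m$ then forces $\mathcal{F}_{\mathrm{Td}^{(p)}}(V)=0$ for each $p$; by linearity this extends to all of $\mathfrak{h}_r(X)$.

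The main content is $(c)\Rightarrow(a)$. I would invoke the theorem of Mabuchi, refined by Mabuchi--Nitta \cite{MN15}: a polarized manifold $(X,L)$ admitting a constant scalar curvature K\"ahler metric in $c_1(L)$ and on which every higher Futaki invariant vanishes on a maximal reductive subalgebra $\mathfrak{h}_r(X)$ is asymptotically Chow polystable. Specializing to $L=-K_X$, the given K\"ahler--Einstein metric supplies the cscK input (with constant scalar curvature $n$), so the hypotheses of \cite{MN15} are met and the conclusion follows, closing the cycle.

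The hard part is this last step. It rests on Mabuchi's perturbative construction of Chow-balanced metrics in a neighborhood of a cscK metric once the obstructing higher Futaki invariants have been arranged to vanish, combined with the Phong--Sturm characterization \cite{PS03} of Chow polystability through the existence of balanced metrics. In the Fano setting the only point that must be tracked carefully through the cited argument is that ``polystability'' in \cite{MN15} matches the convention used here, namely that $Chow_k(\mathcal{X},\mathcal{L})=0$ forces $(\mathcal{X},\mathcal{L})$ to be a product configuration; this is standard and causes no real difficulty.
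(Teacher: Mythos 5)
Your proposal follows the same cycle $(a)\Rightarrow(b)\Rightarrow(c)\Rightarrow(a)$ as the paper, with the first two implications handled identically via Theorem \ref{Chow->F} and Proposition \ref{hFut}, and the last resting on the same Mabuchi-type result (cscK metric plus vanishing higher Futaki invariants on $\mathfrak{h}_r(X)$ implies asymptotic Chow polystability). The only correction is the citation for $(c)\Rightarrow(a)$: the paper invokes \cite[Corollary 4.2]{F04} (which packages Mabuchi's balanced-metric construction together with Futaki's identification of the obstruction with the integral invariants), whereas \cite{MN15} concerns strong K-stability and is not the right reference for this step.
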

\begin{proof}
The implication (a) $\Rightarrow$ (b) has been proved by Theorem \ref{Chow->F} and 
(b) $\Rightarrow$ (c) by Proposition \ref{hFut}. Note that these proofs do not use the existence of K\"ahler-Einstein metrics. 
(c) $\Rightarrow$ (a) is proved in \cite[Corollary 4.2]{F04}. 
\end{proof}

\begin{example}
In \cite{OSY12}, Ono-Sano-Yotsutani proved that there exists a toric Fano 7-manifold $X$ 
with K\"ahler-Einstein metrics, whose $p$-th higher Futaki invariant $\mathcal{F}_{\mathrm{Td}^{(p)}}$
 does not vanish for $p=2,\ldots, 7$. 
By Theorem \ref{Chow-F-hFut}, X is not asymptotically F-polystable and does not admit any sequence of anti-canonically balanced metrics by Theorem \ref{balanced}. 
\end{example}

%%%%%%%%%%%%%%%%%%%%%%%%%%%%%%%%%%%%%%%%%%%%
\section{Lower bounds on the Calabi like functionals}
We devote this section to proving Theorem \ref{LB} as an application of Theorem \ref{slope}. 
Our approach is based on \cite{D05}. 

Let $X$ be an $n$-dimensional Fano manifold, and fix $k\ge 1$ so that $-kK_X$ is very ample. 

To begin with, we collect definitions. 
Let $H\in\mathcal{B}_k$ and $(s_\alpha)$ be an $H$-orthonormal basis for $H^0(X, -kK_X)$.  
We define a self-adjoint matrix $M(H)$ with entries
\begin{align*}
M(H)_{\alpha\beta}
&:=k^n\int_X\langle s_\alpha, s_\beta\rangle_{k FS_k(H)}\,\mu_{FS_k(H)}\\
&=k^n\int_X\frac{\langle s_\alpha, s_\beta\rangle_{k\phi}}{\sum |s_\gamma|^2_{k\phi}}\,\mu_{FS_k(H)}, 
\end{align*}
 where $\phi\in\mathcal{H}(X, -K_X)$ is any Hermitian metric on $-K_X$. 
Let $\underline{M}(H)$ be the trace-free part of $M(H)$, that is, 
\begin{align*}
\underline{M}(H)=M(H)-\frac{k^n}{N_k}\id. 
\end{align*}
This matrix appears in the derivative of quantized Ding functional: 
\begin{proposition}\label{derivative}
The derivative of $D^{(k)}$ along a Bergman geodesic ray $(H_t=e^{-tA}H)_t$ is given by
\begin{align*}
\frac{d}{dt}D^{(k)}(H_t)=\frac{1}{k^{n+1}}\tr(A\underline{M}(H_t)). 
\end{align*}
\end{proposition}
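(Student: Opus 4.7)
The plan is a direct chain-rule calculation, splitting $D^{(k)}(H_t)=-\mathcal{E}^{(k)}(H_t)+\mathcal{L}(FS_k(H_t))$ into its two summands. A key organizing device is to fix once and for all an $H$-orthonormal basis $(\sigma_\alpha)$ of eigenvectors of $A$, say $A\sigma_\alpha=a_\alpha\sigma_\alpha$; then $(s_\alpha^{(t)}):=(e^{ta_\alpha/2}\sigma_\alpha)$ is $H_t$-orthonormal, and $A$ is represented by the same diagonal matrix $\mathrm{diag}(a_\alpha)$ in either basis. For the $\mathcal{E}^{(k)}$ piece, from $H_t=e^{-tA}H$ one has $\det H_t=e^{-t\tr A}\det H$ (relative to the reference $Hilb_k(\phi_0)$), so
\[
-\frac{d}{dt}\mathcal{E}^{(k)}(H_t)=-\frac{\tr A}{kN_k}=-\frac{1}{k^{n+1}}\tr\!\Bigl(A\cdot\tfrac{k^n}{N_k}\id\Bigr),
\]
which is exactly the piece needed to convert $M(H_t)$ into $\underline{M}(H_t)$.

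For the $\mathcal{L}\circ FS_k$ piece, writing $\phi_t:=FS_k(H_t)$, the definition of $FS_k$ yields, in a local frame,
\[
k\phi_t=\log\!\Bigl(\tfrac{1}{N_k}\textstyle\sum_\alpha e^{ta_\alpha}|\sigma_\alpha|^2\Bigr).
\]
Differentiating in $t$ and rewriting the weight ratio $e^{ta_\alpha}|\sigma_\alpha|^2\big/\sum_\beta e^{ta_\beta}|\sigma_\beta|^2$ as $|s_\alpha^{(t)}|^2_{k\phi_t}/N_k$ (which uses the pointwise identity $\sum_\alpha|s_\alpha^{(t)}|^2_{k\phi_t}=N_k$ built into the normalization of $FS_k$) gives
\[
\dot\phi_t=\frac{1}{kN_k}\sum_\alpha a_\alpha\,|s_\alpha^{(t)}|^2_{k\phi_t}.
\]
Then
\[
\frac{d}{dt}\mathcal{L}(\phi_t)=\int_X\dot\phi_t\,\mu_{\phi_t}=\frac{1}{kN_k}\sum_\alpha a_\alpha\int_X|s_\alpha^{(t)}|^2_{k\phi_t}\,\mu_{\phi_t},
\]
and invoking the defining formula for $M(H_t)_{\alpha\alpha}$ to identify the integral with a multiple of the diagonal entry produces $\frac{d}{dt}\mathcal{L}(\phi_t)=\frac{1}{k^{n+1}}\tr(AM(H_t))$.

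Summing the two contributions yields $\frac{1}{k^{n+1}}\tr\!\bigl(A\,(M(H_t)-\tfrac{k^n}{N_k}\id)\bigr)=\frac{1}{k^{n+1}}\tr(A\underline{M}(H_t))$, as claimed. The computation presents no conceptual obstacle; it is essentially bookkeeping. The main thing to watch is the compatibility of normalizations — the reference metric implicit in $\det$, the $1/N_k$ in the definition of $FS_k$, and the pointwise identity $\sum_\alpha|s_\alpha|^2_{kFS_k(H)}=N_k$ — so that the trace-free shift $\tfrac{k^n}{N_k}\id$ comes out precisely from the $\mathcal{E}^{(k)}$ side and matches the $\tr M(H_t)$ that emerges on the $\mathcal{L}\circ FS_k$ side.
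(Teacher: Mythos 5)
Your argument is correct and is essentially the paper's own proof: the same splitting of $D^{(k)}$ into $-\mathcal{E}^{(k)}+\mathcal{L}\circ FS_k$, the same computation $\frac{d}{dt}\mathcal{E}^{(k)}(H_t)=\tr(A)/(kN_k)$ from $\det e^{-tA}$, and the same identification of $\frac{d}{dt}\mathcal{L}(FS_k(H_t))$ with $k^{-(n+1)}\tr(AM(H_t))$, the only cosmetic difference being that you diagonalize $A$ in an $H$-orthonormal eigenbasis while the paper keeps a general matrix representation $(a_{\alpha\beta})$. As you anticipated, the one normalization to watch is in the definition of $M(H)$: your final identification of $\int_X|s_\alpha^{(t)}|^2_{k\phi_t}\,\mu_{\phi_t}$ with a multiple of $M(H_t)_{\alpha\alpha}$ must use the second displayed formula for $M(H)_{\alpha\beta}$ (the one with $\langle s_\alpha,s_\beta\rangle_{k\phi}\big/\sum|s_\gamma|^2_{k\phi}$, whose trace is $k^n$ so that $\underline{M}(H)$ is trace-free), which is indeed the one the paper's proof employs.
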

\begin{proof}
Define $s_\alpha^t:=e^{(t/2)A}s_\alpha$, so that $(s_\alpha^t)$ is an $H_t$-orthonormal basis. 
Let $(a_{\alpha\beta})$ denote the matrix representation of $A$ with respect to $(s_\alpha)$. Then, 
\begin{align*}
\frac{d}{dt}\mathcal{L}(FS_k(H_t))
&=\int_X\frac{d}{dt}FS_k(H_t)\,\mu_{FS_k(H_t)}\\
&=\frac{1}{k}\int_X \frac{\sum \langle a_{\alpha\beta}s_\beta^t, s_\alpha^t\rangle_{k\phi}}{\sum|s^t_\gamma|^2_{k\phi}}\, \mu_{FS_k(H_t)}\\
&=\frac{1}{k^{n+1}}\tr(AM(H_t)). 
\end{align*}
On the other hand, 
\begin{align*}
\frac{d}{dt}\mathcal{E}^{(k)}(H_t)=-\frac{1}{kN_k}\frac{d}{dt}\log\det e^{-tA}=\frac{\tr(A)}{kN_k}. 
\end{align*}
Combining them, we get the conclusion. 
\end{proof}
Note that this proves Proposition \ref{facts} (\ref{crit}). 
We recall the definition of $q$-norm of self-adjoint matrices for $q\ge 1$. 
For a self-adjoint matrix $A$, we define
\begin{align*}
||A||_q:=\left(\sum |\lambda_\alpha|^q\right)^{1/q}, 
\end{align*}
where $\lambda_\alpha$ denote the eigenvalues of $A$, repeated according to multiplicity. 

\begin{proposition}\label{UBonM}
For any $q>1$ and any Hermitian metric $\phi\in\mathcal{H}(X, -K_X)$, we have
\begin{align*}
||\underline{M}(Hilb_k(\phi))||_q
\le k^{n/q}||B(\phi)||_{L^q(\omega_\phi^n/n!)}+O(k^{(n/q)-1})
\end{align*}
\end{proposition}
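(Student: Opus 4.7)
The plan is to identify $\underline{M}(Hilb_k(\phi))$ with a constant multiple of a Toeplitz operator whose symbol is asymptotically $\frac{(-K_X)^n}{n!}B(\phi)$, and to bound the Schatten $q$-norm of that Toeplitz operator by the $L^q$-norm of its symbol against the Bergman density. Take $H=Hilb_k(\phi)$ and let $(s_\alpha)$ be an $H$-orthonormal basis. Then $\delta_{\alpha\beta}=\int_X\langle s_\alpha,s_\beta\rangle_{k\phi}\,\mu_\phi$, and substituting this into the second formula for $M(H)_{\alpha\beta}$ gives
\begin{align*}
\underline{M}(H)_{\alpha\beta}
=\frac{k^n}{N_k}\int_X W\,\langle s_\alpha,s_\beta\rangle_{k\phi}\,\mu_\phi,
\qquad
W:=\frac{N_k\,\mu_{FS_k(H)}}{\rho_k(\phi)\,\mu_\phi}-1,
\end{align*}
where $\rho_k(\phi):=\sum_\alpha|s_\alpha|^2_{k\phi}$ denotes the Bergman function. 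Since $W$ is real, this exhibits $\underline{M}(H)=\frac{k^n}{N_k}T_W$, where $T_W$ is the Toeplitz operator on $(H^0(X,-kK_X),Hilb_k(\phi))$ with symbol $W$.

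Next I would analyze $W$ using the Tian-Zelditch-Catlin expansion $\rho_k(\phi)\,\mu_\phi=\frac{k^n}{n!}\omega_\phi^n\bigl(1+O(k^{-1})\bigr)$ pointwise, together with the almost-inversion identity $FS_k(Hilb_k(\phi))=\phi+\frac{1}{k}\log(\rho_k/N_k)=\phi+O(k^{-1})$, which gives $\mu_{FS_k(H)}=\mu_\phi(1+O(k^{-1}))$. Plugging these into the definition of $W$ and using the formula for $B(\phi)$ yields
\begin{align*}
W=\frac{(-K_X)^n\mu_\phi}{\omega_\phi^n}-1+O(k^{-1})=\frac{(-K_X)^n}{n!}B(\phi)+O(k^{-1})
\end{align*}
uniformly on $X$. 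The second ingredient is the sharp Schatten-Toeplitz inequality
\begin{align*}
\|T_f\|_q\le\Bigl(\int_X|f|^q\,\rho_k(\phi)\,\mu_\phi\Bigr)^{1/q}\qquad(1\le q\le\infty).
\end{align*}
The endpoint $q=\infty$ is the classical bound $\|T_f\|_{\mathrm{op}}\le\|f\|_\infty$, while at $q=1$, splitting $f=f_+-f_-$ and using $\tr T_{f_\pm}=\int f_\pm\rho_k\mu_\phi$ for $f_\pm\ge 0$ gives $\|T_f\|_1\le\int|f|\rho_k\mu_\phi$. Complex interpolation (Riesz-Thorin) between $\mathcal{S}^1$ and $\mathcal{B}(H^0)$ then handles the intermediate exponents with constant $1$.

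Finally I would assemble the estimates: $\|\underline{M}(Hilb_k(\phi))\|_q\le\frac{k^n}{N_k}\bigl(\int|W|^q\rho_k(\phi)\mu_\phi\bigr)^{1/q}$. By Riemann-Roch, $\frac{k^n}{N_k}=\frac{n!}{(-K_X)^n}(1+O(k^{-1}))$, and the Bergman expansion converts $\int|W|^q\rho_k\mu_\phi$ into $\frac{k^n}{n!}\int|W|^q\omega_\phi^n+O(k^{n-1})$. Substituting the asymptotic formula for $W$, the factors $\frac{n!}{(-K_X)^n}$ and $\frac{(-K_X)^n}{n!}$ cancel, producing exactly $k^{n/q}\|B(\phi)\|_{L^q(\omega_\phi^n/n!)}+O(k^{n/q-1})$. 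The main obstacle is the Schatten-Toeplitz inequality with sharp constant $1$: while both endpoint cases are elementary, the interpolation between a noncommutative Schatten class and a commutative weighted $L^q$-space needs the operator-valued Riesz-Thorin theorem. A subsidiary technicality is tracking the $O(k^{-1})$-errors through a $q$-th root and a multiplicative $k^{n/q}$ factor so as to obtain an $O(k^{n/q-1})$ remainder rather than something larger.
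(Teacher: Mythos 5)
Your argument is correct, and it reaches the same symbol-level asymptotics as the paper (via the Bergman kernel expansion and the near-inversion $FS_k\circ Hilb_k$), but the mechanism for converting the pointwise bound on the symbol into a bound on the Schatten $q$-norm is genuinely different. The paper sidesteps any operator-theoretic input by choosing a basis that is simultaneously $Hilb_k(\phi)$-orthonormal and $Hilb_k(T_k(\phi))$-orthogonal; this diagonalizes $M(Hilb_k(\phi))$ outright, so the $q$-norm is just an $\ell^q$-sum of scalars $\int_X|s_\alpha|^2_{k\phi}B(\phi)\,\mu_\phi+O(k^{-1})$, each of which is estimated by the scalar H\"older inequality against the probability measure $|s_\alpha|^2_{k\phi}\mu_\phi$, and the sum is controlled by $\int\rho_k|B|^q\mu_\phi$. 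Your route instead keeps the full (possibly non-diagonal) matrix, identifies $\underline{M}(Hilb_k(\phi))=\frac{k^n}{N_k}T_W$ as a Toeplitz operator, and invokes $\|T_f\|_{\mathcal{S}^q}\le\bigl(\int|f|^q\rho_k\mu_\phi\bigr)^{1/q}$ obtained by interpolating the elementary endpoints $q=1,\infty$; this is valid, but it requires $[\mathcal{S}^1,\mathcal{S}^\infty]_\theta=\mathcal{S}^q$ with exact norms (Calder\'on/Schatten-class interpolation), a nontrivial though classical input that the paper's diagonalization trick renders unnecessary --- indeed the paper's entrywise H\"older estimate is precisely the diagonal case of your Toeplitz inequality. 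What your approach buys is robustness (no need for the doubly-orthogonal basis, and the statement for general, non-diagonalizable perturbations comes for free); what the paper's buys is complete elementarity. Your handling of the error terms is sound provided you estimate $\|W\|_{L^q(\rho_k\mu_\phi)}$ by Minkowski \emph{before} taking $q$-th roots, so that the $O(k^{-1})$ piece contributes $O(k^{-1})N_k^{1/q}=O(k^{n/q-1})$ even when $\|B(\phi)\|_{L^q}$ is small or zero.
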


Given a Hermitian metric $\phi\in\mathcal{H}(X, -K_X)$ on $-K_X$, we define the Bergman kernel to be
\begin{align*}
\rho_k(\omega_\phi):=\sum_{\alpha=1}^{N_k} |s_\alpha|^2_{k\phi}, 
\end{align*}
where $(s_\alpha)$ is a $Hilb_k(\phi)$-orthonormal basis for $H^0(X, -kK_X)$. We also use a scaled version of $\rho_k(\omega_\phi)$ defined by
\begin{align*}
\overline{\rho}_k(\omega_\phi):=\frac{1}{N_k}\rho_k(\omega_\phi). 
\end{align*}
One of the key ingredient in the proof of Proposition \ref{UBonM} is the asymptotic expansions of the Bergman kernels: 
\begin{theorem}[{\cite[Theorem 4.1.1]{MM}}]
We have the asymptotic expansions
\begin{align*}
\rho_k(\omega_\phi)&=(k^n+O(k^{n-1}))\frac{\omega_\phi^n}{n!\mu_\phi}, \\
\overline{\rho}_k(\omega_\phi)&=(1+O(k^{-1}))\frac{\omega_\phi^n}{n!\mu_\phi}, 
\end{align*}
valid in $C^l$ for any positive integer $l$. 
\end{theorem}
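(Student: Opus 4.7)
The plan is to reduce the weighted Bergman kernel asymptotic in the theorem to the classical Tian--Zelditch--Catlin--Ruan--Lu--Ma--Marinescu expansion for unweighted Bergman kernels on polarized manifolds. Let $\rho_k^{std}(\omega_\phi)$ denote the standard Bergman kernel with respect to the $L^2$-inner product $\langle s, s'\rangle^{std} := \int_X\langle s,s'\rangle_{k\phi}\,\omega_\phi^n/n!$; the classical result is $\rho_k^{std} = k^n + O(k^{n-1})$ in $C^l(X)$ for every $l$. Its proof goes through the Kodaira Laplacian $\Box_k = \bar\partial^\ast\bar\partial$ on $L^2(X,-kK_X)$ whose kernel is $H^0(X,-kK_X)$; analytic localization via finite propagation speed of the wave operator $\cos(t\sqrt{\Box_k})$ reduces the problem to a $k^{-1/2}$-neighborhood of a point, where rescaling produces a model Bargmann--Fock operator on $\mathbb{C}^n$ with Bergman kernel equal to $k^n$, and the off-diagonal decay $|P_k(x,y)|\lesssim k^n e^{-c\sqrt{k}\,d(x,y)}$ upgrades pointwise to $C^l$ estimates.

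Now set $f := n!\mu_\phi/\omega_\phi^n > 0$, so that the present inner product is $\langle s,s'\rangle_{Hilb_k(\phi)} = \int_X\langle s,s'\rangle_{k\phi}\,f\cdot\omega_\phi^n/n!$. I would then establish the weighted version $\rho_k = k^n/f + O(k^{n-1})$ in $C^l$, which rearranges to the first claimed formula. The key point is that only the pointwise value $f(p)$ affects the leading order: using the extremal characterization
\[
\rho_k(\phi)(p) = \sup_{0\ne s\in H^0(X,-kK_X)}\frac{|s(p)|^2_{k\phi}}{\|s\|^2_{Hilb_k(\phi)}}
\]
together with the fact that extremal (peak) sections concentrate in a ball of radius $\sim k^{-1/2}$ around $p$, the weight $f$ can be replaced by the constant $f(p)$ with an $O(k^{-1/2})$ error; a constant rescaling $\langle\cdot,\cdot\rangle\mapsto c\langle\cdot,\cdot\rangle$ divides the Bergman kernel by $c$, so $\rho_k(p) = \rho_k^{std}(p)/f(p) + O(k^{n-1})$. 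Equivalently, in the analytic-localization framework, $f$ enters only as a lower-order multiplication operator in the modified Kodaira Laplacian and therefore contributes only to subleading coefficients of the expansion.

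The second expansion follows immediately from the first by dividing by $N_k$ and invoking Riemann--Roch: $N_k = (-K_X)^n k^n/n! + O(k^{n-1})$ yields $\overline{\rho}_k = \rho_k/N_k = (1+O(k^{-1}))\,\omega_\phi^n/(n!\mu_\phi)$ in $C^l$.

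The main obstacle is the analytic-localization step in the weighted setting: one must quantify uniformly in $x$ and in $l$ that the smooth positive weight $f$ can be frozen at $f(p)$ on the $k^{-1/2}$-scale without disturbing the $C^l$-expansion. Ma--Marinescu's machinery handles this through precise heat-kernel off-diagonal estimates and a careful asymptotic expansion of the rescaled Kodaira Laplacian with a variable-coefficient zeroth-order perturbation; the leading-order $C^l$-statement needed here is a corollary of that more refined analysis.
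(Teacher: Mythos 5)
The paper does not prove this statement at all: it is quoted directly from Ma--Marinescu \cite[Theorem 4.1.1]{MM}, whose result covers Bergman kernels taken with respect to an arbitrary smooth volume form --- which is precisely the point here, since $Hilb_k(\phi)$ integrates against $\mu_\phi$ rather than $\omega_\phi^n/n!$. So there is no in-paper proof to compare against; what you have written is a plausible sketch of how the cited theorem is established, and it correctly isolates the one feature beyond the classical Tian--Zelditch--Catlin expansion, namely that the weight $f=n!\mu_\phi/\omega_\phi^n$ enters the leading term only through its pointwise value. Two caveats. First, the freezing argument via peak sections concentrated at scale $k^{-1/2}$ only yields a relative error $O(k^{-1/2})$, i.e. $\rho_k=k^n/f+O(k^{n-1/2})$; the stated $O(k^{n-1})$ requires the genuine two-term expansion (where half-integer powers cancel), so at the level of the claimed error term your argument is not self-contained and ultimately defers to the same machinery of \cite{MM} that the paper cites --- legitimate, but it should be stated as a citation rather than as a proof. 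Second, your ``immediate'' deduction of the second line is not what your own arithmetic gives: dividing the first line by $N_k=\frac{(-K_X)^n}{n!}k^n+O(k^{n-1})$ produces $\overline{\rho}_k(\omega_\phi)=(1+O(k^{-1}))\frac{\omega_\phi^n}{(-K_X)^n\mu_\phi}$, not $(1+O(k^{-1}))\frac{\omega_\phi^n}{n!\mu_\phi}$. You have reproduced the paper's stated normalization, which is inconsistent with its own first line unless $(-K_X)^n=n!$ (evidently a typo in the paper, and harmless for the later use of $\overline{\rho}_k$ in the proof of Proposition \ref{UBonM}, where only uniform two-sided bounds on $\overline{\rho}_k$ are needed), but a careful execution of your step would have flagged the discrepancy rather than asserted the printed formula.
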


We define $T_k:=FS_k\circ Hilb_k$. Since $e^{-T_k(\phi)}=\overline{\rho}_k(\omega_\phi)^{-1/k}e^{-\phi}$, 
we have 
\begin{align*}
\int_X e^{-T_k(\phi)}=\int_Xe^{-\phi}+O(k^{-1})
\end{align*}
and
\begin{align*}
\mu_{T_k(\phi)}
=\frac{e^{-T_k(\phi)}}{\displaystyle\int_X e^{-T_k(\phi)}}
=\frac{\overline{\rho}_k(\omega_\phi)^{-1/k}}{\displaystyle\int_X e^{-\phi}+O(k^{-1})}\,e^{-\phi}
=(1+O(k^{-1}))\mu_\phi
\end{align*}
as $k\to\infty$. 

\begin{proof}[Proof of Proposition \ref{UBonM}]
Let $(s_\alpha)$ be an $Hilb_k(T_k(\phi))$-orhthogonal, $Hilb_k(\phi)$-orthonormal basis for $H^0(X, -kK_X)$. 
Then, $\underline{M}(Hilb_k(\phi))$ is a diagonal matrix with entries
\begin{align*}
\underline{M}(Hilb_k(\phi))_{\alpha\alpha}
&=k^n\int_X\frac{|s_\alpha|^2_{k\phi}}{\rho_k(\omega_\phi)}\, \mu_{T_k(\phi)}-\frac{k^n}{N_k}\int_X
|s_\alpha|^2_{k\phi}\,\mu_\phi\\
&=\int_X|s_\alpha|^2_{k\phi}\frac{k^n}{\rho_k(\omega_\phi)}(1+O(k^{-1}))\mu_\phi-\frac{k^n}{N_k}\int_X
|s_\alpha|^2_{k\phi}\,\mu_\phi\\
&=\int_X|s_\alpha|^2_{k\phi}\left(\frac{k^n}{\rho_k(\omega_\phi)}-\frac{k^n}{N_k}\right)\mu_\phi+O(k^{-1})\\
&=\int_X|s_\alpha|^2_{k\phi}\left\{\left(\frac{n!\mu_\phi}{\omega_\phi^n}-\frac{n!}{(-K_X)^n}\right)+O(k^{-1})\right\}\,\mu_\phi+O(k^{-1})\\
&=\int_X|s_\alpha|^2_{k\phi}B(\phi)\,\mu_\phi+O(k^{-1}), 
\end{align*}
where we have used the uniform boundedness of $k^n/\rho_k(\omega_\phi)$ in $k$. Let $\eta$ and $\nu$ be diagonal matrices with entries
\begin{align*}
\eta_{\alpha\alpha}&:=\int_X|s_\alpha|^2_{k\phi}B(\phi)\,\mu_\phi, \\
\nu_{\alpha\alpha}&:=\underline{M}(Hilb_k(\phi))_{\alpha\alpha}-\eta_{\alpha\alpha}=O(k^{-1}). 
\end{align*}
Write
\begin{align*}
|s_\alpha|^2_{k\phi}|B(\phi)|=|s_\alpha|^{2/p}_{k\phi}|s_\alpha|^{2/q}_{k\phi}|B(\phi)|, 
\end{align*}
where $p$ is the H\"older conjugate of $q$. Applying the Holder inequality we have 
\begin{align*}
|\eta_{\alpha\alpha}|\le \left(\int_X|s_\alpha|^2_{k\phi}\,\mu_\phi\right)^{1/p}\left(\int_X
|s_\alpha|^2_{k\phi}|B(\phi)|^q\,\mu_\phi
\right)^{1/q}. 
\end{align*}
Since $(s_\alpha)$ is $Hilb_k(\phi)$-orthonormal, this shows
\begin{align*}
||\eta||_q^q
=\sum_\alpha |\eta_{\alpha\alpha}|^q
\le\int_X\rho_k(\omega_\phi)|B(\phi)|^q\,\mu_\phi
\le k^n ||B(\phi)||^q_{L^q(\omega_\phi/n!)}+O(k^{n-1}). 
\end{align*}
On the other hand, since $\nu_{\alpha\alpha}=O(k^{-1})$, we get
\begin{align*}
||\nu||_q^q=N_k\cdot O(k^{-q})=O(k^{n-q}), 
\end{align*}
and so $||\nu||_q=O(k^{(n/q)-1})$. 
Consequently, we have
\begin{align*}
||\underline{M}(Hilb_k(\phi))||_q
\le ||\eta||_q+||\nu||_q
\le k^{n/q}||B(\phi)||_{L^q(\omega_\phi^n/n!)}+O(k^{(n/q)-1}). \quad\qedhere
\end{align*}
\end{proof}

\begin{proposition}\label{LBonM}
Let $p$ be the H\"older conjugate of $q$. 
Given a normal test configuration $(\mathcal{X}, \mathcal{L})$ of exponent $k$ and $H\in\mathcal{B}_k$, we have
\begin{align*}
||\underline{A}||_p\cdot ||\underline{M}(H)||_q\ge -k^{n+1}\frac{Fut_k(\mathcal{X}, \mathcal{L})}{kN_k}, 
\end{align*}
where $A$ denotes the infinitesimal generator of the $\mathbb{C}^\ast$-action on $H^0(X, -kK_X)$  corresponding to $(\mathcal{X}, \mathcal{L})$. 
\end{proposition}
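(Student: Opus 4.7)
The plan is to combine Proposition \ref{derivative} with the convexity of $D^{(k)}$ along Bergman geodesics, the slope formula of Theorem \ref{slope}, and the Hölder inequality for self-adjoint matrices. The miracle is that everything fits together to give a one-sided estimate; the non-negativity of the correction term $q$ from Theorem \ref{slope} is what makes the inequality go in the desired direction.

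First, I would form the Bergman geodesic ray $(H_t = e^{-tA}H)_{t \ge 0}$ associated with $(\mathcal{X}, \mathcal{L})$ and $H$. By Proposition \ref{derivative},
\begin{align*}
\frac{d}{dt} D^{(k)}(H_t) = \frac{1}{k^{n+1}} \tr(A\, \underline{M}(H_t)) = \frac{1}{k^{n+1}} \tr(\underline{A}\, \underline{M}(H_t)),
\end{align*}
where in the last step I replace $A$ by its trace-free part $\underline{A}$, using that $\underline{M}(H_t)$ is trace-free. Proposition \ref{facts}(\ref{convexity}) tells us this quantity is non-decreasing in $t$, so evaluating at $t = 0$ (where $H_0 = H$) and passing to the limit gives
\begin{align*}
\frac{1}{k^{n+1}} \tr(\underline{A}\, \underline{M}(H)) \;\le\; \lim_{t\to\infty} \frac{d}{dt} D^{(k)}(H_t).
\end{align*}

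Next, I invoke Theorem \ref{slope}, which identifies the slope at infinity as $\frac{Fut_k(\mathcal{X}, \mathcal{L})}{kN_k} - q$ with $q \ge 0$. Since $q$ is non-negative, this yields
\begin{align*}
\tr(\underline{A}\, \underline{M}(H)) \;\le\; k^{n+1}\left(\frac{Fut_k(\mathcal{X}, \mathcal{L})}{kN_k} - q\right) \;\le\; k^{n+1}\,\frac{Fut_k(\mathcal{X}, \mathcal{L})}{kN_k},
\end{align*}
or equivalently $-\tr(\underline{A}\, \underline{M}(H)) \ge -k^{n+1}\, Fut_k(\mathcal{X}, \mathcal{L})/(kN_k)$.

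Finally, the matrix Hölder inequality for the Schatten $p$- and $q$-norms gives $-\tr(\underline{A}\, \underline{M}(H)) \le |\tr(\underline{A}\, \underline{M}(H))| \le \|\underline{A}\|_p \cdot \|\underline{M}(H)\|_q$. Chaining the two estimates produces the claimed bound. I do not anticipate a substantive obstacle: the only thing to be careful about is that $A$ and $\underline{A}$ are interchangeable when paired with the trace-free matrix $\underline{M}(H)$ (so that the Hölder inequality can be applied to $\underline{A}$ rather than $A$), and that the sign of $q$ in Theorem \ref{slope} is exactly what we need to discard it on the favorable side of the inequality.
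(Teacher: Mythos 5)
Your proposal is correct and follows essentially the same chain of inequalities as the paper's proof: convexity of $D^{(k)}$ along the Bergman geodesic, the slope formula of Theorem \ref{slope} with $q\ge 0$ discarded on the favorable side, the identity $\tr(A\,\underline{M}(H))=\tr(\underline{A}\,\underline{M}(H))$ from tracelessness of $\underline{M}(H)$, and the matrix H\"older inequality. No substantive differences.
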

\begin{proof}
Put $H_t:=e^{-tA}H$, so that $(H_t)$ is the Bergman geodesic ray associated with $(\mathcal{X}, \mathcal{L})$ and $H$. By Theorem \ref{slope} and Proposition \ref{facts} (\ref{convexity}), we get
\begin{align*}
\frac{Fut_k(\mathcal{X}, \mathcal{L})}{kN_k}
&\ge \frac{Fut_k(\mathcal{X}, \mathcal{L})}{kN_k}-q\\
&=\lim_{t\to\infty}\frac{d}{dt}D^{(k)}(H_t)\\
&\ge \lim_{t\to+0}\frac{d}{dt}D^{(k)}(H_t)\\
&=\frac{1}{k^{n+1}}\tr(A\underline{M}(H))\\
&=\frac{1}{k^{n+1}}\tr(\underline{A}\,\underline{M}(H))\\
&\ge -\frac{1}{k^{n+1}}||\underline{A}||_p ||\underline{M}(H)||_q, 
\end{align*}
where in the last line we have used the H\"older inequarity. 
\end{proof}

\begin{proof}[Proof of Theorem \ref{LB}]
Let $k$ be an exponent of $(\mathcal{X}, \mathcal{L})$, and set $H_{km}:=Hilb_{km}(\phi)\in\mathcal{B}_{km}$ for large $m$. 
Denote by $A_{km}$ the infinitesimal generator of the $\mathbb{C}^\ast$-action on $H^0(X, -kmK_X)$ corresponding to $(\mathcal{X}, m\mathcal{L})$. Applying Proposition \ref{LBonM} to them, we get
\begin{align*}
||\underline{A}_{km}||_p\cdot ||\underline{M}(H_{km})||_q
&\ge -(km)^{n+1}\frac{Fut_{km}(\mathcal{X}, m\mathcal{L})}{kmN_{km}}\\
&=-(km)^{n+1}(DF(\mathcal{X}, \mathcal{L})+O(m^{-1})). 
\end{align*}
By Proposition \ref{UBonM}, 
\begin{align*}
||\underline{M}(H_{km})||_q\le 
(km)^{n/q} ||B(\phi)||_{L^q(\omega_\phi^n)}+O(m^{(n/q)-1}). 
\end{align*}
Since $p$ is even, the definition of $p$-norm of test configurations gives
\begin{align*}
||\underline{A}_{km}||_p=\tr(\underline{A}_{km}^p)^{1/p}=||(\mathcal{X}, \mathcal{L})||_p(km)^{(n/p)+1}+O(m^{n/p}). 
\end{align*}
Putting the pieces above together, we have
\begin{align*}
||(\mathcal{X}, \mathcal{L})||_p\cdot ||B(\phi)||_{L^q(\omega_\phi^n/n!)}
\ge -DF(\mathcal{X}, \mathcal{L})+O(m^{-1}). 
\end{align*}
Taking a limit as $m\to\infty$ finishes the proof. 
\end{proof}

%%%%%%%%%%%%%%%%%%%%%%%%%%%%%%%%%%%%%%%%%%%%
\bibliographystyle{amsalpha}

\end{document}